\theoremstyle{plain}
\newtheorem{Theorem}{Thm}[section]
\newtheorem{Thm}[Theorem]{Theorem}
\newtheorem{Lem}[Theorem]{Lemma}
\newtheorem{Cor}[Theorem]{Corollary}
\newtheorem{Prop}[Theorem]{Proposition}
\newtheorem*{Thm*}{Theorem}
\newtheorem*{Lem*}{Lemma}
\newtheorem*{Cor*}{Corollary}
\newtheorem*{Prop*}{Proposition}
\theoremstyle{definition}
\newtheorem{Def}[Theorem]{Definition}
\newtheorem{Rem}[Theorem]{Remark}
\newtheorem{Exm}[Theorem]{Example}
\newtheorem*{Def*}{Definition}
\newtheorem*{Defs*}{Definitions}
\newtheorem*{Not*}{Notation}
\theoremstyle{remark}
\newtheorem{Text}[Theorem]{}
\newtheorem*{Text'}{}
\renewenvironment{proof}[1][\proofname]{\par
\pushQED{\qed}%
\normalfont \topsep6\p@\@plus6\p@\relax
\trivlist
\item[\hskip\labelsep
\scshape
#1\@addpunct{.}]\ignorespaces
}{%
\popQED\endtrivlist\@endpefalse
}
\newcommand\mbb{\mathbb}
\providecommand\SO{\textrm{SO}}
\providecommand\M{\textrm{M}}
\providecommand\cl{\textrm{cl}}
\renewcommand\S{\textrm{S}}
\newcommand\ol{\overline}
\newcommand\A{\mbb{A}}
\newcommand\C{\mbb{C}}
\newcommand\N{\mbb{N}}
\renewcommand\P{\mbb{P}}
\newcommand\R{\mbb{R}}
\newcommand\Z{\mbb{Z}}
\newcommand\vphi{\phi}
\renewcommand\phi{\varphi}
\renewcommand\epsilon{\varepsilon}
\renewcommand\theta{\vartheta}
\newcommand\V{\mathcal{V}}
\newcommand\im{ {\rm im}}
\newcommand\GL{ {\rm GL}}
\newcommand{\ratto}{\dashrightarrow}
\newcommand\Pol[3][n]{{#3}[{#2}_1,\ldots,{#2}_{#1}]}
\newcommand\conv{ {\rm conv}}
\begin{document}
\title{The Algebraic Boundary of $\SO(2)$-Orbitopes}
\author{Rainer Sinn}
\address{Fachbereich Mathematik und Statistik, Universit{\"a}t Konstanz, 78457 Konstanz, Germany}
\email{rainer.sinn@uni-konstanz.de}
\subjclass[2010]{Primary: 14P05, 52A99; Secondary: 90C22, 22E47, 14Q05}
\keywords{convex hull, secant variety, basic closed semi-algebraic set}
\date{}

\begin{abstract}
Let $X\subset\A^{2r}$ be a real curve embedded into an even-dimensional affine space. In the main result of this paper, we characterise when the $r$-th secant variety to $X$ is an irreducible component of the algebraic boundary of the convex hull of the real points $X(\R)$ of $X$. This fact is then applied to $4$-dimensional $\SO(2)$-orbitopes and to the so called Barvinok-Novik orbitopes to study when they are basic closed as semi-algebraic sets. In the case of $4$-dimensional $\SO(2)$-orbitopes, we find all irreducible components of their algebraic boundary.
\end{abstract}
\maketitle

\section{Introduction}
An \emph{orbitope} is the convex hull of an orbit under a linear action of a compact real algebraic group on a real vector space. It is a compact, convex, semi algebraic set.
This paper focuses on the \emph{algebraic boundary} of $\SO(2)$-orbitopes, i.e.~the $\R$-Zariski closure of their boundary in the euclidean topology. The algebraic boundary is a central object in convex algebraic geometry. It is for example closely related to the notion of a \emph{spectrahedron} by a result of Helton and Vinnikov, see \cite{HelVinMR2292953}, section 3. The notion of a spectrahedron is of interest in convex optimization, namely semi-definite programming, see e.g.~the book by Boyd, El Ghaoui, Feron and Balakrishnan \cite{BoydMR1284712} or the article \cite{VanBoydMR1379041} by Vandenberghe and Boyd, which contains a survey of applications of semi-definite programming.

It is also an important object when studying the question, which orbitopes are \emph{basic closed} semi-algebraic sets, i.e.~defined by finitely many simultaneous polynomial inequalities. This question has already been asked by Sanyal, Sottile and Sturmfels in their paper \cite{SS}, that initiated the study of orbitopes in their own right (in section 2 of this paper, they propose ten questions on orbitopes; our question is number 4).

Again, the notion of being basic closed relates to spectrahedra, because spectrahedra are always basic closed semi-algebraic sets.
On the other hand, for a given basic closed semi-algebraic set, Lasserre developped in \cite{LasMR2505746} a semi-definite relaxation method that was further investigated  by Helton and Nie in \cite{HeltonNieMR2515796} and \cite{HeltonNieMR2533752}. Gouveia, Parrilo and Thomas constructed semi-definite relaxations of real algebraic sets in \cite{GouMR2630035} and \cite{GouTho}. Recently, Gouveia and Netzer further investigated exactness properties of these two methods, see \cite{NetzerGouv}.

In this paper, we will restrict our attention to the special case of orbitopes of the group $\SO(2)$ of real orthogonal $2\times 2$ matrices, which is probably the simplest non-discrete case. Already in this case, the algebraic boundary is hard to describe. A first family of examples was done by Sanyal et.~al. in \cite{SS}. The authors proved that an infinite family of $\SO(2)$-orbitopes called universal $\SO(2)$-orbitopes are spectrahedra (\cite{SS}, Theorem 5.2). Their spectrahedral representation gives a determinantal representation of the irreducible polynomial defining the algebraic boundary of these convex sets.

If the $\SO(2)$-orbitope is not universal, then the algebraic boundary tends to be reducible. We will focus on the question whether or not the secant variety to the $\R$-Zariski closure of the orbit we started with is a component of the algebraic boundary. Our main result is the following statement, which deals with convex hulls of real curves in general.

\begin{Thm*}
Let $X\subset\A^{2r}$ be an irreducible curve and assume that the real points $X(\R)$ of $X$ are Zariski-dense in $X$.
Let $C$ be the convex hull of $X(\R)\subset\R^{2r}$ and suppose that the interior of $C$ is non-empty.
Then the $(r-1)$-th secant variety to $X$ is an irreducible component of the algebraic boundary of $C$ if and only if the set of all $r$-tuples of real points of $X$ that span a face of $C$ has dimension $r$.
\end{Thm*}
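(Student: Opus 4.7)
The plan is to use the real face-spanning $r$-tuples to parametrise an open subset of $Y\cap\partial C$, where $Y$ denotes the $(r-1)$-th secant variety to $X$, and then compare dimensions on the two sides. Note that $Y$ is irreducible of dimension at most $2r-1$, while the algebraic boundary $\partial_a C$ is pure of dimension $2r-1$ since $C^\circ\neq\emptyset$. Let $\Phi\subset X(\R)^r$ denote the semi-algebraic locus of ordered $r$-tuples spanning a face of $C$; trivially $\dim\Phi\leq r$. With $\Delta$ the closed standard $(r-1)$-simplex, introduce the parametrisation
\[
\sigma\colon\Phi\times\Delta\to\R^{2r},\quad\bigl((p_1,\ldots,p_r),(\lambda_1,\ldots,\lambda_r)\bigr)\longmapsto\sum_{i=1}^r\lambda_i p_i,
\]
whose image lies in $\partial C\cap Y$ by construction. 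Because $C^\circ\neq\emptyset$, the curve $X$ spans $\A^{2r}$, so generic $r$-tuples on $X$ are affinely independent; if $\Phi$ has full dimension $r$, the same holds on a Zariski-open subset of $\Phi$, whence $\sigma$ is $r!$-to-one on a dense open subset and $\dim\sigma(\Phi\times\Delta)=\dim\Phi+(r-1)$.

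For the $(\Leftarrow)$ direction, assume $\dim\Phi=r$. Then $\sigma(\Phi\times\Delta)$ has semi-algebraic dimension $2r-1$, and its Zariski closure is a subvariety of the irreducible $Y$ of the same dimension, hence equals $Y$. Since $\sigma(\Phi\times\Delta)\subset\partial C$, we have $Y\subset\overline{\partial C}^{\,\mathrm{Zar}}=\partial_a C$, and since $\dim Y$ matches the pure dimension of $\partial_a C$, the variety $Y$ is an irreducible component of $\partial_a C$.

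For the $(\Rightarrow)$ direction, assume $Y$ is an irreducible component of $\partial_a C$, so $\dim Y=2r-1$. Since $Y$ is not contained in the union of the remaining components, $Y\cap\partial C$ is Zariski-dense in $Y$. The key claim is that a Zariski-dense subset of $Y\cap\partial C$ arises from face-spanning real $r$-tuples: for a generic $y\in Y\cap\partial C$, the minimal face of $C$ containing $y$ is an $(r-1)$-simplex with vertices in $X(\R)$, giving a preimage in $\Phi\times\Delta$. Granting this, $\sigma(\Phi\times\Delta)$ is Zariski-dense in $Y$ and so has semi-algebraic dimension $2r-1$; the dimension formula from the first paragraph then forces $\dim\Phi+(r-1)=2r-1$, that is, $\dim\Phi=r$.

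The main obstacle is precisely the key claim invoked in the $(\Rightarrow)$ direction, which I expect to establish in three steps. First, the incidence $\{((p_1,\ldots,p_r),q):q\in\mathrm{aff.span}(p_1,\ldots,p_r)\}\to Y$ is generically finite because the source has dimension $2r-1=\dim Y$, so that a generic $y\in Y$ is in the affine span of a unique unordered $r$-tuple of points on $X$. Second, for $y\in\partial C$, the minimal face $F(y)$ of $C$ is the convex hull of its extreme points, and extreme points of $\conv(X(\R))$ should be identified with points of $X(\R)$. Third, the face $F(y)$ has dimension exactly $r-1$ for generic $y$: it is contained in the $(r-1)$-plane spanned by the tuple from the first step, and $y$ is a generic point of the $(2r-1)$-dimensional $Y$; hence $F(y)=\conv(p_1,\ldots,p_r)$ with all $p_i\in X(\R)$. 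The delicate sub-step is the second one, since it requires a careful analysis of extreme points that may be limits of points on $X(\R)$ without belonging to $X(\R)$, a point that may need separate treatment when $X(\R)$ is not Euclidean-closed.
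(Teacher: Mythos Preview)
Your $(\Leftarrow)$ direction coincides with the paper's: both define the semi-algebraic parametrisation $\sigma$ (called $\Phi$ in the paper) on the locus of affinely independent face-spanning tuples, observe that its generic fibres are finite because a general point of $S_{r-1}(X)$ lies on only finitely many secant $(r-1)$-planes, and conclude by a dimension count that the image is $(2r-1)$-dimensional and hence Zariski-dense in the irreducible hypersurface $Y$.

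For $(\Rightarrow)$ the paper takes a different route from yours. Rather than analysing minimal faces, it argues locally: from $\dim(Y\cap\partial C)=2r-1$ one finds a Euclidean ball $B(x,\epsilon)$ with $B(x,\epsilon)\cap Y\subset\partial C$; this set is Zariski-dense in $Y$, so it meets the constructible locus of points lying on honest secant $(r-1)$-planes, and the paper then asserts that (after shrinking) this neighbourhood lies in the image of $\Phi$, whence $\dim\Phi(M_0\times\Delta)=2r-1$ and $\dim M_0=r$ by the dimension formula for semi-algebraic maps. Your approach instead attempts to produce the real $r$-tuple explicitly from the minimal face $F(y)$.

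There is a genuine gap in your step 3. You claim that $F(y)$ is contained in the $(r-1)$-plane $L_y$ spanned by the unique tuple from step 1, but you give no argument for this containment. Two issues compound here. First, the unique $r$-tuple furnished by step 1 consists a priori of \emph{complex} points of $X$; uniqueness plus conjugation-invariance only forces the unordered tuple to be Galois-stable, so it may split into complex-conjugate pairs rather than real points, and nothing in your outline excludes this. Second, even if $L_y$ happens to be a real $(r-1)$-plane, the inclusion $y\in L_y$ in no way constrains the affine span of $F(y)$; what would be needed is that every linear subspace of $Y$ through a generic point is contained in $L_y$ (so that $\operatorname{aff}(F(y))$, which one can argue lies in $Y$ because $\partial C$ agrees with $Y$ near $y$, is forced into $L_y$). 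That is a nontrivial statement about the linear geometry of secant varieties of curves and is not supplied by your steps 1--2. Your lower bound $\dim F(y)\ge r-1$ is correct and follows cleanly from $y\notin S_{r-2}(X)$ together with Carath\'eodory, but the matching upper bound is precisely the missing ingredient, and without it the conclusion ``hence $F(y)=\conv(p_1,\ldots,p_r)$ with all $p_i\in X(\R)$'' does not follow.
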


For two different infinite families of $\SO(2)$-orbitopes, namely $4$-dimensional $\SO(2)$-orbitopes and the family of Barvinok-Novik orbitopes, we will apply this result to prove that the appropriate secant varieties are components of their algebraic boundaries. For $4$-dimensional orbitopes, we use a complete description of their faces by Smilansky, see \cite{SmiMR815607}, to find all irreducible components of their algebraic boundary. In the case of Barvinok-Novik orbitopes, a result of the work \cite{BarvinokNovikMR2383752} will be essential.
In both cases, our results can be used to characterise, when these semi-algebraic sets are basic closed.

Namely, we will prove for Barvinok-Novik orbitopes that they are not basic closed.
For $4$-dimensional $\SO(2)$-orbitopes, we prove the following statement:
\begin{Thm*}
Let $C$ be a $4$-dimensional $\SO(2)$-orbitope. The following are equivalent:\\
(a) $C$ is linearly isomorphic to the universal $\SO(2)$-orbitope $C_2$.\\
(b) $C$ is a spectrahedron.\\
(c) $C$ is a basic closed semi-algebraic set.
\end{Thm*}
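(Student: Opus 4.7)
The implications (a)$\Rightarrow$(b) and (b)$\Rightarrow$(c) are short. By \cite{SS}, Theorem~5.2, the universal orbitope $C_2$ carries an explicit spectrahedral representation, and the class of spectrahedra is preserved by invertible linear maps, giving (a)$\Rightarrow$(b). For (b)$\Rightarrow$(c), the positive semidefiniteness $A_0+\sum_i x_iA_i \succeq 0$ is equivalent to the finite list of principal minor inequalities, so any spectrahedron is basic closed.

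The heart of the theorem is (c)$\Rightarrow$(a), which I would prove in contrapositive form: if $C$ is a $4$-dimensional $\SO(2)$-orbitope not linearly isomorphic to $C_2$, then $C$ is not basic closed. Using the representation theory of $\SO(2)$, I may assume $C = \conv X(\R)$ where $X(\R) = \{(\cos a\theta,\sin a\theta,\cos b\theta,\sin b\theta) : \theta\in\R\}$ for coprime integers $1\le a<b$, the universal case being $(a,b)=(1,2)$. Assume $(a,b)\ne(1,2)$. The first step is to invoke Smilansky's complete description of the face structure of $4$-dimensional $\SO(2)$-orbitopes from \cite{SmiMR815607} to show that the set of pairs $(p,q)\in X(\R)\times X(\R)$ spanning an edge of $C$ is a two-dimensional family, yet \emph{not} the entire symmetric square of $X(\R)$. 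By the main theorem of the present paper, applied with $r=2$, the two-dimensionality of this family forces the (first) secant variety $\sigma(X)$ of $X$---the Zariski closure of the union of all chords of $X$---to be an irreducible component of the algebraic boundary $\partial_a C$.

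Next I extract the obstruction. Because the edge-spanning pairs form a proper subset of the symmetric square of $X(\R)$, a positive-dimensional family of chords of $X$ passes through the topological interior $\mathrm{int}(C)$; hence the real hypersurface $\sigma(X)(\R)$ meets $\mathrm{int}(C)$ in a set of dimension $3$, containing smooth real points of $\sigma(X)$. Suppose for contradiction that $C=\{p_1\ge 0,\ldots,p_s\ge 0\}$ with irreducible polynomials $p_i$. The polynomial cutting out the irreducible hypersurface $\sigma(X)$ must divide some $p_j$, since every component of the algebraic boundary is contained in the zero set of at least one of the defining polynomials. Then $p_j$ vanishes at a smooth real interior point $p_0$ of $\sigma(X)$ and changes sign across $V(p_j)$ at $p_0$. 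But $p_0$ has a neighbourhood contained in $C$ on which $p_j\ge 0$, a contradiction.

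The main technical obstacle is the first step: translating Smilansky's combinatorial face classification into the sharp dimension statement on edge-spanning pairs needed to invoke the main theorem, and simultaneously verifying that the edge pairs fail to exhaust the symmetric square precisely when $(a,b)\ne(1,2)$. Once the component $\sigma(X)$ of $\partial_a C$ is identified and the geometry of its real chords is understood, the final sign-change argument is essentially formal, and the easy directions combine standard facts with \cite{SS}, Theorem~5.2.
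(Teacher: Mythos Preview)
Your proposal follows essentially the same route as the paper: the easy implications via \cite{SS}, Theorem~5.2 and principal minors, then the contrapositive of (c)$\Rightarrow$(a) by using Smilansky's face list to put $S_1(X)$ into $\partial_a C$, finding a chord of $X$ through $\mathrm{int}(C)$, and deriving a sign-change contradiction with any basic-closed description.

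Two points where the paper is tighter than your sketch. First, you assume the defining polynomials $p_i$ are irreducible; this is \emph{not} without loss of generality, since factoring a single inequality $g\ge 0$ into sign conditions on its irreducible factors changes the set. The paper's Lemma~\ref{Lem:BasicClosed} instead cancels squares so that the secant polynomial $h$ divides some $g_j$ exactly once, whence $g_j$ changes sign across $\V(h)_{\rm reg}(\R)$. Second, for producing a \emph{regular} real point of $S_1(X)$ inside $\mathrm{int}(C)$, the paper does not argue via a dimension count on families of interior chords. It takes a single chord meeting the interior (which exists because $\overline{I_{pq}}\neq[0,1]$ exactly when $(p,q)\neq(1,2)$, Remark~\ref{Rem:IntervalIpq}) and invokes Corollary~\ref{Cor:localdim}: any point on a secant line through regular real points of $X$ is central in $S_1(X)(\R)$, hence has regular real points of $S_1(X)$ in every neighbourhood. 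Your dimension argument can be made to work, but you would still need to rule out that all non-edge chords lie in the $p$-gon or $q$-gon faces, and then argue smoothness at a generic such point; the centrality lemma short-circuits both steps.
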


To show the basic ideas of the paper, we will informallly consider the example of the $4$-dimensional Barvinok-Novik orbitope $B_4$ (all the statements will be proved in this paper): It is by definition the convex hull of the symmetric trigonometric moment curve
\[
\{(\cos(\theta),\sin(\theta),\cos(3\theta),\sin(3\theta))\colon \theta\in[0,2\pi)\}
\]
The Zariski closure $X$ of this trigonometric curve is an algebraic curve of degree $6$ in projective $4$-space.
The orbitope $B_4$ is a simplicial and centrally symmetric convex set. By a theorem of Barvinok and Novik (\cite{BarvinokNovikMR2383752}, Theorem 1.2; or alternatively, by Smilansky's result on faces of $4$-dimensional $\SO(2)$-orbitopes in \cite{SmiMR815607}), it is locally neighbourly, i.e.~the convex hull of sufficiently close points on the trigonometric moment curve is a face of the orbitope. This allows us to explicitly compute the algebraic boundary of $B_4$; namely, it consists of two components, a quadratic hypersurface and the secant variety to the curve $X$, which is a hypersurface of degree $8$ in this case. We will show that the secant component intersects the interior of the orbitope, which proves that it cannot be basic closed. This is easier to see, if we slice the situation with the $2$-dimensional coordinate subspace $W$ spanned by the second and the last vector of the standard basis of $\R^4$. We get a $2$-dimensional semi-algebraic set (see figure \ref{fig:B4notbasic}) whose algebraic boundary has now three components, two lines and a curve of degree $3$ that goes through the origin. The explicit equations and an explanation of the line in gray can be found in Example \ref{Exm:B4}.
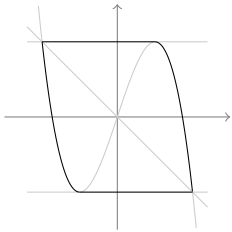
\begin{figure}[h]
\begin{center}
\begin{tikzpicture}
\draw[draw = black!50!white,->] (-1.5, 0) -- (1.5,0);
\draw[draw = black!50!white,->] (0, -1.5) -- (0, 1.5);
\draw[draw = black!20!white] (-1.2,1) -- (-1,1);
\draw[draw = black!20!white] (0.5,1) -- (1.2,1);
\draw[draw = black!20!white] (-1.2,-1) -- (-0.5,-1);
\draw[draw = black!20!white] (1,-1) -- (1.2,-1);
\draw[draw = black!20!white] (-1.2,1.2) -- (1.2,-1.2);
\draw[domain=-1.05:-1,samples=20,draw=black!20!white] plot(\x,{-4*\x*\x*\x+3*\x});
\draw[domain=-0.5:0.5,samples=200,draw=black!20!white] plot(\x,{-4*\x*\x*\x+3*\x});
\draw[domain=1:1.05,samples=20,draw=black!20!white] plot(\x,{-4*\x*\x*\x+3*\x});
\draw[domain=-1:-0.5,samples=100,draw=black] plot(\x,{-4*\x*\x*\x+3*\x});
\draw[domain=0.5:1,samples=100, draw=black] plot(\x,{-4*\x*\x*\x+3*\x});
\draw[draw=black] (-1,1) -- (0.5,1);
\draw (-0.5,-1) -- (1,-1);
\end{tikzpicture}
\end{center}
\caption{The intersection of $B_4$ with the two-dimensional coordiante subspace $W$ is the set enclosed by the black lines.}
\label{fig:B4notbasic}
\end{figure}

This article is organized as follows: In Section \ref{sec:Basics}, we will go through the basic definitions and some basic facts of the representation theory of $\SO(2)$. We will see that two orbitopes in the same representation are isomorphic, if the orbits are generically chosen.

In Section \ref{sec:Curve}, we compute the degree and singularities of the rational curve, which is the Zariski closure of an orbit under an action of $\SO(2)$. 

Section \ref{sec:SAG} is the crucial section where we apply methods from semi-algebraic geometry to prove our main result cited above. As mentioned above, it will be used in sections \ref{sec:4dim} and \ref{sec:BNOrbs} to prove that the secant variety is a component of the algebraic boundary of certain families of $\SO(2)$-orbitopes, namely the $4$-dimensional ones and the Barvinok-Novik orbitopes.
 
In Section \ref{sec:4dim}, we deal with $4$-dimensional $\SO(2)$-orbitopes, using Smilansky's characterisation of the faces of a $4$-dimensional $\SO(2)$-orbitope and the material of Section 4 to compute its algebraic boundary. Universal orbitopes are used to compute explicit equations for algebraic boundaries in some examples.

The final Section \ref{sec:BNOrbs} contains the study of the Barvinok-Novik orbitopes. Again, a secant variety is a component of the algebraic boundary and the key object in the proof of the fact that a Barvinok-Novik orbitope is not basic closed. Our proof uses a theorem of Barvinok and Novik on faces of Barvinok-Novik orbitopes from their paper \cite{BarvinokNovikMR2383752}.

\section{Setup and basic facts}\label{sec:Basics}
\begin{Def}
A \emph{representation} of $\SO(2)$ is a pair $(\rho,V)$ of a finite-dimensional real vector space $V$ and a homomorphism $\rho\colon \SO(2)\to \GL(V)$ of real algebraic groups. This means, after choosing a basis of $V$ and thereby identifying $\GL(V)$ with $\GL_n(\R)$, that $\rho$ is a group homomorphism defined by polynomials with real coefficients.\\
The dimension of the vector space $V$ is called the \emph{dimension} of the representation $(\rho, V)$.
The representation $(\rho,V)$ is called \emph{irreducible}, if $V$ has no non-trivial invariant subspace, i.e.~no subspace $W\subset V$, $\{0\}\neq W \neq V$, such that $\rho(A)w\in W$ for all $w\in W$ and $A\in \SO(2)$.\\
\end{Def}

If we have a representation $(\rho, V)$ of $\SO(2)$ then it induces an action of the group on the vector space $V$, namely $A$ acts on a vector $v$ as $\rho(A)(v)$.

\begin{Text}
We fix the following notation for representations of $\SO(2)$: For $j\in\Z$, $j\neq 0$, write
\[
\rho_j \;\colon
\left\{
\begin{array}[]{ccc}
\SO(2) &\to & \GL_2(\R) \\
A = \left(\begin{array}{cc}
\cos(\theta) & -\sin(\theta) \\
\sin(\theta) & \cos(\theta)
\end{array}\right)
&\mapsto & A^j =
\left(\begin{array}{cc}
\cos(j\theta) & -\sin(j\theta) \\
\sin(j\theta) & \cos(j\theta)
\end{array}\right) 
\end{array}\right.
\]
Denote by $\rho_0$ the trivial representation of $\SO(2)$, i.e.~the representation $(\rho,\R)$, where $\rho$ is the constant group homomorphism, i.e.~$\rho(A)=1$ in $\GL_1(\R)=\R^\times$ for all $A\in\SO(2)$.
The set $\{\rho_j\colon j\in\Z\}$ is the family of all irreducible representations of $\SO(2)$ (up to linear isomorphism commuting with the group action on $V$ via $\rho$). In particular, any representation of $\SO(2)$ that does not contain the trivial representation is even-dimensional.
\end{Text}

\begin{Rem}\label{Rem:ComplexRep}
It is often useful to switch to complex coordinates in the following sense: We identify $\SO(2)$ with the unit circle $\S^1\subset\C$ in the complex plane by sending a rotation matrix as above to $\exp(i\theta)$ and we identify $\R^2$ with $\C$ via $(x,y)\mapsto x+iy$. Then we can think of the represenation $\rho_j$ as multiplication by the exponential, i.e.~for all $z\in\C$ and $\theta\in[0,2\pi)$ we have
\[
\rho_j(\exp(i\theta))z = \exp(i\theta)^j z = \exp(ij\theta)z
\]
This is not to be confused with the complexification of the representation. The complexification is the tensor product of the representation with the field of complex numbers $\C$ over $\R$: The complexification of the group $\SO(2)$ is simply
\[
\SO(2)_\C:=\SO(2)\otimes_\R \C = \left\{
\left(\begin{array}[]{cc}
a & b\\
-b & a
\end{array}\right)\colon a,b\in \C, a^2+b^2 = 1 \right\}
=:\SO(2,\C)
\]
The complexification $\rho_j\otimes\C$ of the representation $\rho_j$ acts on $\C^2 = \R^2\otimes_\R\C$ via the same expression, i.e.~$(\rho_j\otimes\C)(A)=A^j\in\GL_2(\C)$ for all $A\in\SO(2,\C)$.\\
This representation is isomporphic to a representation of the torus $\C^\times$: Every matrix in $\SO(2,\C)$ is diagonalizable with diagonal form
\[
\left(
\begin{array}[]{cc}
\frac{1}{2} & \frac{1}{2i} \\
\frac{1}{2} & -\frac{1}{2i}
\end{array}\right)
\left(
\begin{array}[]{cc}
a & b\\
-b & a
\end{array}\right) \left(
\begin{array}[]{cc}
1 & 1 \\
i & -i
\end{array}\right)=
\left(
\begin{array}[]{cc}
a + ib & 0 \\
0 & a-ib
\end{array}\right)
\]
This change of coordinates simultaneously diagonalizes $\SO(2,\C)$. In other words, the group is conjugate in $\GL_2(\C)$ to the subgroup
\[
\left\{
\left(\begin{array}[]{cc}
a+ib & 0\\
0 & a-ib
\end{array}\right)\colon a,b\in \C, a^2+b^2 = 1\right\}
\]
of $\GL_2(\C)$ which is isomorphic to the torus $\C^\times$. The base change in $\C^2$ that corresponds to the conjugation of $\SO(2,\C)$ to this torus subgroup gives an isomorphism of the representation $\rho_j\otimes \C$ with the representation
\begin{align*}
\C^\times\times \C^2 &\to \C^2\\
(z, (x,y)) &\mapsto  (z^jx,z^{-j}y)
\end{align*}
of $\C^\times$. The real form of the torus $\C^\times$ coming from $\SO(2)$ is the unit circle, namely those $z\in \C^\times$ with $z=\frac{1}{\ol{z}}$.\\
Note that under this change of coordinates, the orbit of $(1,0)$ under the action of $\SO(2)_\C$ is mapped to the orbit of $(\frac{1}{2},\frac{1}{2})$ under the action of the torus, which is in turn isomorphic to the orbit of $(1,1)$ under the action of the torus.
\end{Rem}

\begin{Def}
Let $(\rho,V)$ be a representation of $\SO(2)$. Take $w\in V$. The convex hull of the orbit of $w$ by the action of $\SO(2)$ on $V$, i.e.~the set
\[
\conv(\rho(\SO(2))w)=\conv(\{\rho(A)w\colon A\in\SO(2)\})
\]
is called the \emph{$\SO(2)$-orbitope} of $w$ with respect to $(\rho,V)$.
\end{Def}

\begin{Rem}\label{Rem:OrbitChoice}
Fix a representation $(\rho,V)$ of $\SO(2)$.\\
(a) If there is a vector $w\in V$ such that the $\SO(2)$-orbitope of $w$ with respect to $(\rho, V)$ has non-empty interior then the representation $(\rho,V)$ must be multiplicity-free and must not contain the trivial representation as an irreducible factor.\\
(b) Any two $\SO(2)$-orbitopes with respect to $(\rho,V)$ and with non-empty interior are linearly isomorphic:
Let $j_1,\ldots,j_r\in\Z$ such that $\rho=\rho_{j_1}\oplus\ldots\oplus\rho_{j_r}$. Let $C\subset V$ be an $\SO(2)$-orbitope with non-empty interior. Say it is the convex hull of the orbit $\rho(S^1)(z_1,\ldots,z_r)\subset \C^r$ with $z_1,\ldots,z_r\in\C^\times$, i.e.~all components non-zero, which follows from the assumption that the orbitope has non-empty interior. By complex rescaling of the $j$-th irreducible component by $z_j^{-1}$ for $j=1,\ldots,r$, we get an $\R$-linear automorphism of $\C^r$ that commutes with the group action and sends the above vector to $(1,\ldots,1)\subset\C^r$. Therefore, the orbit of $(z_1,\ldots,z_r)$ is $\R$-linearly isomorphic to the orbit of $(1,\ldots,1)$.\\ 
We can also assume that the indices $j_1,\ldots,j_r$ of the irreducible components of the representation $\rho=\rho_{j_1}\oplus\ldots\oplus\rho_{j_r}$ are relatively prime, because for any $d\in\N$
\begin{align*}
&\{(\exp(idj_1\theta),\ldots,\exp(idj_r\theta))\colon\theta\in[0,2\pi)\} = \\
&\{(\exp(ij_1\theta),\ldots,\exp(ij_r\theta))\colon\theta\in[0,2\pi)\}
\end{align*}
\end{Rem}

The set of extreme points of an $\SO(2)$-orbitope is the orbit of which it is the convex hull. In their paper \cite{SS}, Sanyal, Sottile and Sturmfels proved a more general statement than the following (cf.~\cite{SS}, Proposition 2.2). In the special case of $\SO(2)$ that we are interested in, the proof is simpler and we will give it here.
\begin{Prop}\label{Prop:Extreme}
Let $(\rho,V)$ be a representation of $\SO(2)$ and let $C:=\conv(\rho(\SO(2))w)\subset V$ be an $\SO(2)$-orbitope. Then every point of the orbit of which $C$ is the convex hull is an exposed point of $C$. In particular, the orbit is the set of extreme points of $C$.
\end{Prop}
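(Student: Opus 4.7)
The plan is to exploit the $\SO(2)$-equivariance of the situation. Since every $\rho(A)$ is a linear automorphism of $V$ that preserves $C$, it sends exposed points to exposed points, and the orbit of $w$ is a single $\SO(2)$-orbit. Thus it suffices to exhibit one linear functional exposing $w$; then $v\mapsto\langle v,\rho(A)w\rangle$ will expose $\rho(A)w$ for every $A\in\SO(2)$.

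To expose $w$, I would use an $\SO(2)$-invariant inner product on $V$, for instance the standard one (invariant because every irreducible summand $\rho_j$ acts on $\R^2$ by rotation), and consider $\ell(v):=\langle v,w\rangle$. Passing to the complex coordinates of Remark \ref{Rem:ComplexRep}, decompose $\rho=\rho_{j_1}\oplus\cdots\oplus\rho_{j_r}$ and write $w=(z_1,\ldots,z_r)\in\C^r$. For $A\in\SO(2)$ corresponding to angle $\theta$, a direct computation gives
\[
\ell(\rho(A)w)\;=\;\sum_{k=1}^{r}|z_k|^2\cos(j_k\theta)\;\le\;\sum_{k=1}^{r}|z_k|^2\;=\;\ell(w),
\]
with equality precisely when $\cos(j_k\theta)=1$ for every $k$ with $z_k\ne 0$, i.e.~precisely when $\rho(A)w=w$ (the summands with $z_k=0$ are irrelevant). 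Hence $\ell$ attains its maximum on the orbit only at $w$. Since any convex combination of orbit points achieving $\ell(w)$ must use only points on which $\ell$ equals $\ell(w)$, the supporting hyperplane $H:=\{v\in V\colon \ell(v)=\ell(w)\}$ intersects $C$ in the single point $w$, so $w$ is exposed.

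For the "in particular" clause, exposed implies extreme, so the orbit is contained in the set of extreme points of $C$. Conversely, since the orbit $\rho(\SO(2))w$ is compact (continuous image of $\SO(2)$) and $C$ is its convex hull, Milman's converse to the Krein--Milman theorem forces every extreme point of $C$ to lie in the orbit. Thus the orbit equals the set of extreme points.

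No step looks genuinely hard; the only thing to handle with care is the case of vanishing components $z_k$, which might a priori spoil the strictness of the inequality, but is harmless because those summands contribute $0=|z_k|^2$ on both sides and do not affect whether $\rho(A)w$ equals $w$.
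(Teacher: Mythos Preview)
Your proof is correct and is essentially the same as the paper's, which observes that the orbit lies on the sphere $\{v:\|v\|=\|w\|\}$ (for the invariant inner product) and hence inherits exposedness from the ball; your explicit inequality $\sum_k|z_k|^2\cos(j_k\theta)\le\sum_k|z_k|^2$ is precisely the computation underlying that sphere argument, and the exposing hyperplane is the same tangent hyperplane $\{v:\langle v,w\rangle=\|w\|^2\}$. Your explicit appeal to Milman's converse for the ``in particular'' clause is a welcome addition that the paper leaves implicit.
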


\begin{proof}
Identify $V$ with $\R^n$ by the choice of a basis of $V$.
Without loss of generality, we can assume that $C$ has non-empty interior, and therefore, that $w=(1,0,1,0,\ldots,1,0)$ (cf.~Remark \ref{Rem:OrbitChoice}).
Then the orbit $\{\rho(A)w\colon A\in\SO(2)\}$ is contained in the sphere $\{x\in\R^n\colon \|x\|=\|w\|\}$ of radius $\|w\|$ in $\R^n$. This implies that $C$ is contained in the ball of radius $\|w\|$. Since every point on the sphere is an exposed point of the ball, the claim follows.
\end{proof}

Let's see some examples.
\begin{Exm}\label{Exm:UniOrb}
Let $n\in\N$ be a natural number. Denote by $\rho\colon \SO(2)\to \GL_{2n}(\R)$ the representation $\rho_1\oplus\rho_2\oplus\ldots\oplus\rho_n$ of $\SO(2)$. The convex hull of the orbit of $(1,0,1,0,\ldots,1,0)\in\R^{2n}$ is called the \emph{universal $\SO(2)$-orbitope} of dimension $2n$. We will denote it by $C_n$. Explicitly, it is the convex hull of the trigonometric curve
\[
\{(\cos(\theta),\sin(\theta),\cos(2\theta),\sin(2\theta),\ldots,\cos(n\theta),\sin(n\theta))\in\R^{2n}\colon \theta\in[0,2\pi)\}
\]
It is called universal, because every $\SO(2)$-orbitope is the projection of a universal $\SO(2)$-orbitope.
Sanyal, Sottile and Sturmfels proved (cf.~\cite{SS}, Theorem 5.2) that the universal $\SO(2)$-orbitope $C_n$ is isomophic to the spectrahedron of positive semi-definite hermitian Toeplitz matrices of size $(n+1)\times(n+1)$ via the linear map
\[
\left\{\begin{array}{ccc}
\R^{2n} & \to & \M_{n+1}(\C) \\
(x_1,y_1,x_2,y_2,\ldots,x_n,y_n) & \mapsto &
\left(\begin{array}[]{cccc}
1 & x_1+iy_1 & \dots & x_n+iy_n \\
x_1-iy_1 & 1 & \ddots & \vdots \\
\vdots & \ddots & \ddots & x_1+iy_1 \\
x_n-iy_n & \ldots & x_1-iy_1 & 1 \\
\end{array}\right)
\end{array}\right.
\]
It follows from this theorem, that $C_n$ is an $n$-neighbourly, simplicial convex set. The maximal dimension of a face of $C_n$ is $n-1$.
\end{Exm}

\section{The Curve Associated with an $\SO(2)$-Orbitope}\label{sec:Curve}
In this paper, a variety is a variety defined over $\R$. In the language of schemes, this means, that a variety is a separated, reduced scheme of finite type over $\R$. In the classical language, it means, that an affine variety is a subset of $\C^n$ defined by real polynomials. These sets define the $\R$-Zariski topology on $\C^n$. The global sections of the sheaf of regular functions on $\A^n$ is the polynomial ring in $n$ variables over the field of real numbers $\R$. An abstract variety is a quasicompact ringed space that is locally as a ringed space isomorphic to an affine variety, as usual.\\
The real points of an affine variety $X\subset\A^n$, written as $X(\R)$, are the points in $X$ that are invariant under complex conjugation acting on $\A^n$. Analoguously for a projective variety $X\subset\P^n$.

An object of great importance for the study of an $\SO(2)$-orbitope is the Zariski closure of the orbit of which it is the convex hull. It has the following properties.
\begin{Prop}\label{Prop:CurveAss}
Let $\rho\colon \SO(2)\to \GL_n(\R)$ be a representation of $\SO(2)$. Let $O_w$ be the orbit of $w\in\R^n$, $w\neq 0$. Denote by $X$ the Zariski closure of $O_w$ in $\A^n$. Embed $\A^n\to \P^n$ via $(x_1,\ldots,x_n)\mapsto (1:x_1:\ldots:x_n)$ and denote by $\bar{X}$ the projective closure of $X$. Then $\bar{X}$ is a rational curve and the regular real points of $\bar{X}$ are exactly the orbit $O_w$, i.e.~$\bar{X}_{\rm reg}(\R)=O_w$.
\end{Prop}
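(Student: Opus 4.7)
My strategy is to work in the diagonalized complexification of Remark \ref{Rem:ComplexRep}, where the orbit map becomes a monomial map, and then to read off both the rationality of $\bar X$ and the description of its real regular locus from this explicit parametrization.

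After applying the base change of Remark \ref{Rem:ComplexRep}, and by Remark \ref{Rem:OrbitChoice}(b) assuming that the indices $j_1,\ldots,j_r$ of the irreducible factors of $\rho$ are coprime, the orbit map becomes a monomial map $\phi\colon\C^\times\to\A^n_\C$, $z\mapsto(z^{m_1}\tilde w_1,\ldots,z^{m_n}\tilde w_n)$, whose exponents $m_i$ form the multiset $\{\pm j_k\}$ and whose coefficients $\tilde w_i$ are nonzero. The coprimality of the $j_k$ forces $\gcd(m_i)=1$, so $\phi$ is injective; and since not all $m_i$ vanish (otherwise $w$ would be fixed by $\SO(2)$ and $\bar X$ would not be a curve) $\phi$ is also an immersion. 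Hence $\phi$ is an isomorphism of $\C^\times$ onto its image, which already proves rationality of $\bar X$.

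Next, I show that $\phi(\C^\times)$ exhausts the affine part $X_\C$ of $\bar X$, which will place all singularities of $\bar X$ at the two points $\nu(0),\nu(\infty)$ obtained by extending $\phi$ to a morphism $\nu\colon\P^1\to\bar X$. Because every $m_i$ is nonzero and both signs occur, the coordinates $z^{m_i}\tilde w_i$ blow up as $z\to 0$ and as $z\to\infty$, so $\nu(0),\nu(\infty)$ lie on the hyperplane at infinity of $\P^n$; any point of the affine part of $\bar X$ is therefore already reached inside $\phi(\C^\times)$, and $X_\C=\phi(\C^\times)$ is smooth. A direct reading of the homogeneous coordinates identifies $\nu(0)$ with a coordinate point of $\P^n$ supported on the coordinates with the most negative exponent, and $\nu(\infty)$ with one supported on the most positive exponent; since the antiholomorphic involution $\sigma$ from the real structure swaps each conjugate pair of diagonalized coordinates, it exchanges these two points, so $\nu(0)$ and $\nu(\infty)$ form a non-real complex conjugate pair.

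Finally, to pinpoint the real points inside $\phi(\C^\times)$, the conjugation relation $\tilde w_{v_k}=\overline{\tilde w_{u_k}}$ on each diagonalized coordinate pair yields $\sigma(\phi(z))=\phi(1/\bar z)$ by direct substitution. Injectivity of $\phi$ then forces $\phi(z)\in\R^n$ iff $z=1/\bar z$, i.e.\ $|z|=1$, so $\phi(\S^1)=O_w$ exhausts the real points of the affine part. Combining this with the previous paragraph, $\bar X(\R)=O_w$ and $O_w\subset\bar X_{\rm reg}$, whence $\bar X_{\rm reg}(\R)=O_w$ as claimed. The main wrinkle is the explicit identification of $\nu(0)$ and $\nu(\infty)$ and the verification that they are $\sigma$-conjugate rather than $\sigma$-fixed, which I expect to carry out directly from the monomial form of $\phi$.
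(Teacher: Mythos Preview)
Your approach is correct and takes a genuinely different route from the paper's. The paper stays in real coordinates throughout: it parametrizes the circle via stereographic projection and composes with the Chebyshev-type polynomials $f_j,g_j$ expressing $\cos(j\theta),\sin(j\theta)$ in terms of $\cos\theta,\sin\theta$, obtaining a rational map $\P^1\dashrightarrow\bar X$ with real coefficients. Rationality follows from birationality of this map, and the identification $\bar X_{\rm reg}(\R)=O_w$ is argued somewhat indirectly: real points of $\P^1$ map to $O_w$, the converse holds on a Zariski-open subset via the inverse rational map, and any leftover real point of $\bar X$ must be isolated in $\bar X(\R)$, hence singular. You instead pass to the diagonalized complexification and work with a monomial parametrization, from which everything can be read off explicitly: the conjugation identity $\sigma(\phi(z))=\phi(1/\bar z)$ pins down the real locus directly as $\phi(\S^1)=O_w$, and the two boundary points $\nu(0),\nu(\infty)$ are visibly a non-real conjugate pair at infinity. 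Your route is essentially the one the paper uses later in the proof of Proposition~\ref{Prop:CurveAssDeg} to compute degree and singularities, so in effect you have merged the arguments of the two propositions. The paper's version is slightly more elementary in that it never has to track the real structure through a complex base change; yours is more explicit and delivers the location of the possible singular points for free.

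Two small points to tighten. First, your assertion that all $\tilde w_i$ are nonzero is not automatic from $w\neq 0$; you should first project to the linear span of the orbit (equivalently, discard irreducible summands on which $w$ vanishes and any trivial summands) before invoking coprimality and the monomial description. Second, the sentence ``$\phi$ is an isomorphism of $\C^\times$ onto its image'' is only fully justified once you know $\phi(\C^\times)=X_\C$ and that $\nu$ is the normalization; it would read more cleanly to deduce rationality from birationality of $\nu\colon\P^1\to\bar X$, then show $X_\C=\phi(\C^\times)$, and finally conclude smoothness of $X_\C$ from $\phi$ being injective and unramified.
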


\begin{proof}
Using the stereographic projection, we get a parametrisation of the unit circle $S^1$ in terms of the rational functions $R(x_0,x_1)=\frac{2x_0x_1}{x_0^2+x_1^2}$ and $I(x_0,x_1)=\frac{x_0^2-x_1^2}{x_0^2+x_1^2}$ on $\P^1$. This gives a birational map
\[
s\colon \P^1 \ratto \V(x^2+y^2-z^2)\subset\P^2=\{(x:y:z)\}
\]
Now, decompose the representation $\rho$ into irreducible factors, say $\rho=\rho_{j_1}\oplus\rho_{j_2}\oplus\ldots\oplus\rho_{j_r}$ for some $j_1,\ldots,j_r\in\N$. We can assume that $j_1,\ldots,j_r$ are relatively prime (cf.~Remark \ref{Rem:OrbitChoice}).
Since $\cos(j\theta)= f_j(\cos(\theta),\sin(\theta))$ and $\sin(j\theta)=g_j(\cos(\theta),\sin(\theta))$ are both polynomial functions of $\cos(\theta)$ and $\sin(\theta)$ for all $j\in\N$, we can define a rational map
\[
\vphi\;\colon
\left\{\begin{array}{ccc}
\V(x^2+y^2-z^2) &\ratto & \P^{2r} \\
{[} x : y :1 ] &\mapsto & [1:f_{j_1}(x,y):g_{j_1}(x,y):\ldots:f_{j_r}(x,y):g_{j_r}(x,y)]
\end{array}
\right.
\]
Its restriction to the real points of $\V(x^2+y^2-z^2)$ gives a parametrisation of the orbit $O_w$. It is injective, because the $j_i$ are relatively prime. Therefore, the restriction of the rational map $\vphi\circ s\colon \P^1\ratto \P^{2r}$ to the real points of $\P^1$ is a parametrisation of the orbit.
Since this orbit is dense in $\bar{X}$, this map $\vphi\circ s$ is a birational map onto an open subset of $\bar{X}$. This proves that $\bar{X}$ is a rational curve.\\
Since the rational functions occuring in these parametrisations have rational coefficients, we know that the image under $\vphi\circ s$ of a real point of $\P^1$ is a real point of $\bar{X}$. The converse is true on an open subset $U\subset\bar{X}$: The image of a real point of $U$ under the inverse rational map is a real point of $\P^1$. The image of $\P^1(\R)$ under $\vphi\circ s$ is closed in the euclidean topology because $\P^1(\R)$ is a compact set. The set $\bar{X}\setminus U$ is finite and therefore a real point in this set lies in $(\im(\vphi))(\R)$ or is an isolated real point and therefore singular.
\end{proof}

\begin{Def}
We call the curve $X=\cl_{Zar}(O_w)\subset\A^n$ of the preceding proposition the \emph{curve associated with} the $\SO(2)$-orbitope $\conv(O_w)$. We denote the projective closure of $X$ with respect to the embedding $\A^n\to \P^n$, $(x_1,\ldots,x_n)\mapsto (1:x_1:\ldots:x_n)$ by $\bar{X}$.
\end{Def}

\begin{Prop}\label{Prop:CurveAssDeg}
Let $\rho=\rho_{j_1}\oplus\rho_{j_2}\oplus\ldots\oplus\rho_{j_r}$ be a representation of $\SO(2)$. Let $C$ be the $\SO(2)$-orbitope of $w\in\R^{2r}$ in this representation and assume that $C$ has non-empty interior. Denote by $d=\gcd(j_1,\ldots,j_r)$ the greatest common divisor of $j_1,\ldots,j_r$ and by $j=\max\{|j_1|,\ldots,|j_r|\}$ the biggest modulus of an index. Denote by $\bar{X}$ the projective curve associated with the orbitope $C$.\\
(a) The curve $\bar{X}$ is non-singular if and only if $\frac{j}{d}-1\in \{\frac{|j_1|}{d},\ldots,\frac{|j_r|}{d}\}$.\\
(b) The degree of the curve $\bar{X}$ is $2\frac{j}{d}$.
\end{Prop}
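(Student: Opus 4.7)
The plan is to produce an explicit monomial parametrisation $\phi : \P^1 \to \bar X$ and read off both statements from it. By Remark \ref{Rem:OrbitChoice} we may assume that $w$ corresponds to $(1,\ldots,1)$ in the complex coordinates of Remark \ref{Rem:ComplexRep}, so that the complex orbit becomes $\{(z^{j_1},z^{-j_1},\ldots,z^{j_r},z^{-j_r}) : z \in \C^\times\}$. Embedding $\A^{2r} \hookrightarrow \P^{2r}$ via $\mathbf x \mapsto [1:\mathbf x]$, writing $z=s/t$ and clearing denominators by the factor $(st)^j$, one obtains the morphism
\[
\phi \colon \P^1 \to \P^{2r}, \qquad [s:t] \;\mapsto\; \bigl[\,s^j t^j \,:\, s^{j+j_i} t^{j-j_i} \,:\, s^{j-j_i} t^{j+j_i}\,\bigr]_{i=1,\ldots,r}.
\]
All $2r+1$ components are monomials of common degree $2j$ in $(s,t)$, and since some $|j_{i_0}|=j$ produces both $s^{2j}$ and $t^{2j}$ among these components, $\phi$ is base-point-free and its image is exactly $\bar X$.

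For part (b), two parameters $z_1, z_2 \in \C^\times$ have the same image iff $(z_1/z_2)^{j_i}=1$ for every $i$, equivalently $(z_1/z_2)^d=1$, so $\phi$ is generically $d$-to-$1$ onto $\bar X$. A generic hyperplane in $\P^{2r}$ pulls back along $\phi$ to a binary form of degree $2j$ whose $2j$ (generically distinct) roots are grouped into $2j/d$ fibres; hence $\deg \bar X = 2j/d$.

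For part (a), set $j'_i := j_i/d$ and factor $\phi$ as $\P^1 \xrightarrow{[s:t] \mapsto [s^d:t^d]} \P^1 \xrightarrow{\psi} \bar X$, where $\psi$ is the analogous morphism built from the relatively prime $j'_i$. Since the normalisation of the rational curve $\bar X$ is $\P^1$ and $\psi$ is generically injective (as $\gcd(j'_i)=1$), the normalisation factors through $\psi$, so $\bar X$ is non-singular iff $\psi$ is a closed embedding. Assuming henceforth $d=1$, injectivity of $\psi$ on $\C^\times$ is immediate from $\gcd = 1$; at the points at infinity the multiplicity-freeness forced by Remark \ref{Rem:OrbitChoice}(a) guarantees a unique $i_0$ with $|j_{i_0}|=j$, and $\psi([0:1])$, $\psi([1:0])$ come out as two distinct coordinate vectors. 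The nontrivial issue is immersion at $[0:1]$ and $[1:0]$: in the affine chart $y_{i_0}=1$ (take $j_{i_0}>0$ without loss), setting $z=s/t$, the map reads $z \mapsto (z^j,\, z^{j+j_i},\, z^{j-j_i})_{i \ne i_0}$ with the chart exponent $j-j_{i_0}=0$ removed. The differential at $z=0$ is non-zero iff the exponent $1$ appears in the remaining set $\{j\} \cup \{j \pm j_i\}_{i\ne i_0}$; since $j \ge 2$ in the substantive case and each $j+j_i \ge 2$, this amounts to $|j_i| = j-1$ for some $i$. A symmetric computation at $[1:0]$ gives the same condition, and restoring the general gcd $d$ yields the stated criterion $j/d - 1 \in \{|j_1|/d,\ldots,|j_r|/d\}$.

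The main obstacle is the bookkeeping in the last step: one has to identify which of the $2r+1$ monomial exponents is the chart coordinate at each point at infinity (carefully tracking the sign of $j_{i_0}$), remove exactly that exponent from the list governing the differential, and then turn the elementary condition "some remaining exponent equals $1$" into the clean arithmetic formula in the proposition. A minor caveat, orthogonal to the main argument, is that the degenerate case $j/d = 1$ (equivalently $r=1$) gives a smooth conic automatically by the full degree-$2$ Veronese, so the criterion in (a) is intended for the substantive range $r \ge 2$, where $j/d \ge 2$.
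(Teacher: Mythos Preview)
Your proof is correct and follows essentially the same route as the paper's: pass to the toric parametrisation of Remark~\ref{Rem:ComplexRep}, read off the degree from the pullback of a generic hyperplane, and decide smoothness by checking whether the monomial map $\P^1\to\P^{2r}$ is injective with nonvanishing differential at the two points over~$\infty$. Your treatment is in fact a bit more explicit than the paper's---you factor $\phi$ through the $d$-th power map rather than reducing to $\gcd=1$ at the outset, and you verify separately that $\psi([0\!:\!1])\neq\psi([1\!:\!0])$---and your observation that the stated criterion in~(a) literally fails for the degenerate conic case $r=1$ is a genuine edge case the paper overlooks. One small slip: the claim ``each $j+j_i\ge 2$'' is not true if some $j_i$ are negative (e.g.\ $j_i=1-j$ gives $j+j_i=1$), but this is harmless since that case again yields $|j_i|=j-1$; alternatively, reduce to $j_i>0$ at the start as the paper does.
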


\begin{proof}
By setting $j_i'=|\frac{j_i}{d}|$ and ordering them naturally, we assume that the $j_i$ are relatively prime and $0<j_1<j_2<\ldots<j_r=j=\frac{j}{d}$. Since $C$ has non-empty interior, we can assume after the application of a linear isomorphism of $\R^{2r}$ that $w=(1,0,1,0,\ldots,1,0)$ (cf.~Remark \ref{Rem:OrbitChoice}). We complexify the situation as explained in Remark \ref{Rem:ComplexRep} and get the following parametrisation of the complex orbit of $w$ after another change of coordinates:
\[
\C^\times\to \C^{2r}, z \mapsto (z^{j_1}, z^{-j_1}, z^{j_2}, z^{-j_2}, \ldots, z^{j_r}, z^{-j_r})
\]
(a) This map extends to a morphism $\phi\colon \{(x_0:x_1)\}=\P^1\to \P^{2r}$ which is given by the equation $\phi(1:z)=(z^{j_r}:z^{j_r+j_1}:z^{j_r-j_1}:\ldots:z^{2j_r}:1)$ on $D_+(x_0)$ and $\phi(s:1)= (s^{j_r}:s^{j_r-j_1}:s^{j_r+j_1}:\ldots:1:s^{2j_r})$ on $D_+(x_1)$. This morphism is injective: If $y,z\in\C^\times$ with $(y^{j_r},y^{j_r+j_1},y^{j_r-j_1},\ldots,y^{2j_r})=(z^{j_r},z^{j_r+j_1},z^{j_r-j_1},\ldots,z^{2j_r})$, then $(y/z)^{j_r}=1$ and therefore, $(y/z)^{j_i}=1$ for $i=1,\ldots,r$. Since the $j_i$ are relatively prime, it follows that $(y/z)\in {\rm U}(j_1)\cap\ldots\cap{\rm U}(j_r)=\{1\}$, where ${\rm U}(n)$ denotes the group of the $n$-th roots of unity.\\
The curve $\bar{X}$ is the image of this injective morphism $\phi$. If $\bar{X}$ is smooth, then $\phi$ must be an isomorphism, because the inverse rational map extends to a morphism on the non-singular curve $\bar{X}$ (\cite{FultonMR0313252}, Chapter 7, Corollary 1). In particular, if $\bar{X}$ is smooth, $\phi$ is an isomorphism of the structure sheaves and therefore, the differential is an isomorphism. This means that $\bar{X}$ is smooth if and only if the derivative of $\phi$ is non-zero at every point.\\
The derivative of $\phi$ is obviously non-zero at every point except for $(1:0)$ and $(0:1)$. It is non-zero at these points if and only if $j_r-1=j_{r-1}$, because only then $z^{j_r-j_{r-1}}=z$ and the gradient does not vanish.\\
(b) As for the degree, if we take a hyperplane $\V(a_0x_0+a_1x_1+b_1y_1+\ldots+a_rx_r+b_ry_r)\subset\P^{2r}$ and intersect it with the image of $\phi\vert_{D_+(x_0)}$, we get the identity
\[
a_0z^{j_r}+a_1z^{j_r+j_1}+b_1z^{j_r-j_1} + a_2 z^{j_r+j_2}+ b_2 z^{j_r-j_2} + \ldots + a_r z^{2j_r}+b_r = 0
\]
For a general choice of the hyperplane, this is a polynomial of degree $2j_r$ and therefore, it will have $2j_r=2\frac{j}{d}$ roots in $\C$.
\end{proof}

\begin{Cor}\label{Cor:UniversalRatNormal}
Let $\rho = \rho_1\oplus\rho_2\oplus\ldots\oplus\rho_n$ and $w=(1,0,1,0,\ldots,1,0)\in \R^{2n}$. Then the curve associated with the orbitope of $w$ in this representation, which is called the universal $\SO(2)$-orbitope of dimension $2n$ (cf.~Example \ref{Exm:UniOrb}), is a rational normal curve.
\end{Cor}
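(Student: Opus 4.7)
The plan is to read off the parametrization of $\bar{X}$ already constructed in the proof of Proposition \ref{Prop:CurveAssDeg} and observe that, after a permutation of homogeneous coordinates of $\P^{2n}$, it coincides with the standard $(2n)$-fold Veronese embedding of $\P^1$.

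First I would specialize the data of Proposition \ref{Prop:CurveAssDeg} to the present setting: we have $r=n$ and $j_i=i$, so $d=\gcd(1,\ldots,n)=1$ and $j=n$, whence part (b) gives $\deg \bar{X}=2n$. Thus $\bar{X}$ is a curve of degree $2n$ in $\P^{2n}$, i.e.\ of the expected degree for a rational normal curve.

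Next I would invoke the explicit morphism $\phi\colon\P^1\to\P^{2n}$ produced in the proof of Proposition \ref{Prop:CurveAssDeg}, which in this case reads
\[
\phi(1:z) = (z^n : z^{n+1} : z^{n-1} : z^{n+2} : z^{n-2} : \cdots : z^{2n} : 1)
\]
on the chart $D_+(x_0)$. The key observation is that the set of exponents appearing is $\{n+k,\,n-k : k=0,1,\ldots,n\} = \{0,1,\ldots,2n\}$, each value occurring exactly once. Reordering the homogeneous coordinates of $\P^{2n}$ to list these exponents in increasing order therefore converts $\phi$ into the standard Veronese morphism $(1:z:z^2:\cdots:z^{2n})$, whose image is the rational normal curve of degree $2n$, so $\bar{X}$ is projectively equivalent to it. I expect no real obstacle: the whole argument reduces to a combinatorial inspection of the monomial exponents, with the degree and the parametrization already supplied by Proposition \ref{Prop:CurveAssDeg}.
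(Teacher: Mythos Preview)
Your proof is correct. The paper takes a slightly more abstract route: it records that $\bar{X}$ is a rational curve (Proposition~\ref{Prop:CurveAss}) of degree $2n$ in $\P^{2n}$ (Proposition~\ref{Prop:CurveAssDeg}(b)) and then simply invokes the classification of curves of minimal degree (Harris, Proposition~18.9), which says that any irreducible nondegenerate curve of degree $d$ in $\P^d$ is a rational normal curve. Your argument is more explicit and self-contained: by inspecting the monomial exponents in the parametrization $\phi$ you exhibit a linear change of coordinates on $\P^{2n}$ that turns $\phi$ into the standard Veronese embedding, so no external classification result is needed. The paper's version trades this explicitness for brevity. One small remark: once you identify $\phi$ with the Veronese map, the degree computation in your first paragraph becomes redundant, so you could streamline by going straight to the parametrization.
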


\begin{proof}
By the preceding proposition, the degree of the rational curve $\bar{X}\subset\P^{2n}$ associated with the orbitope of $w$ is $2n$ and therefore, it is a rational normal curve, cf.~\cite{HarrisiMR1182558}, Proposition 18.9.
\end{proof}

\begin{Rem}
From the proof of the above proposition, we can deduce that the real points of the projective closure of a curve $\bar{X}$ associated with an $\SO(2)$-orbitope $C$ are exactly the orbit $X(\R)=X_{\rm reg}(\R)$, of which $C$ is the convex hull: A regular point of $X$ is real if and only if it lies on the orbit (Proposition \ref{Prop:CurveAss}). If $\bar{X}$ has singular points, these are $\phi(1:0)$ and $\phi(0:1)$ in the notation of the proof of Proposition \ref{Prop:CurveAssDeg}. We change back to the original coordinates by applying
\[
\left(\begin{array}[]{cc}
1 & 1 \\
i & -i
\end{array}\right)
\]
to every irreducible factor of the representation (recall that we applied a complex change of coordinates to get an isomorphism of the complexification of our representation with a representation of the complex torus $\C^\times$, as explained in \ref{Rem:ComplexRep}). The two singular points get mapped to $(0:\ldots:0:1:i)$ and $(0:\ldots:0:1:-i)$, i.e.~a pair of complex conjugate singular points.
\end{Rem}

\section{Convex Hulls of Curves, Secant Varieties and Semi-Algebraic Geometry}\label{sec:SAG}
\begin{Def}
Let $S\subset\R^n$ be a semi-algebraic set.\\
(a) The set $S$ is called \emph{basic closed}, if there are polynomials $g_1,\ldots,g_r\in\R[X_1,\ldots,X_n]$ such that
\[
S = \{x\in\R^n\colon g_1(x)\geq 0,\ldots,g_r(x)\geq 0\}
\]
(b) The \emph{algebraic boundary} $\partial_a S$ of $S\subset\R^n$ is the Zariski closure in $\A^n$ of its boundary $\partial S$ in the euclidean topology.\\
(c) The set $S$ is called \emph{regular}, if it is contained in the closure of its interior.
\end{Def}

Note that every convex semi-algebraic set with non-empty interior is regular and its complement is also regular (possibly empty).
\begin{Lem}\label{Lem:BasicClosed}
Let $\emptyset\neq S\subset\R^n$ be a regular semi-algebraic set and suppose that its complement $\R^n\setminus S$ is also regular and non-empty.\\
(a) The algebraic boundary of $S$ is a variety of pure codimension $1$.\\
(b) If the interior of $S$ intersects the algebraic boundary of $S$ in a regular point 
then $S$ is not basic closed.
\end{Lem}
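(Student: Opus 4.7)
For part (a), the plan is to show first that the topological boundary $\partial S$ has pure dimension $n-1$, and then transfer this to its Zariski closure. Because both $S$ and $\R^n\setminus S$ are regular and non-empty, every point $p\in\partial S$ has both $\mathrm{int}(S)$ and $\mathrm{int}(\R^n\setminus S)$ accumulating to it, so $\partial S$ locally disconnects every Euclidean ball around $p$ into two non-empty open parts. Since no subset of codimension at least $2$ can disconnect a ball, this forces $\dim_p\partial S\geq n-1$, while the reverse bound is automatic because $\partial S$ is nowhere dense. For each irreducible component $W$ of $\partial_a S$, the Zariski-density of $\partial S\cap W$ in $W$ (a standard consequence of $\partial_a S$ being a Zariski closure) lets us choose a point $q\in\partial S\cap W$ lying on no other component; in a Euclidean neighborhood of $q$ one has $\partial S\subseteq W$, so the purity just established yields $\dim W\geq\dim_q\partial S=n-1$, and the matching upper bound $\dim W\leq n-1$ follows from $\partial S\cap W\subseteq\partial S$.

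For part (b), suppose for contradiction that $S=\{g_1\geq 0,\ldots,g_r\geq 0\}$. Let $Y=V(f)$ be the unique irreducible component of $\partial_a S$ through the given regular point $p$; by (a), $Y$ is a smooth hypersurface near $p$, and the inclusion $\partial_a S\subseteq V(g_1\cdots g_r)$ combined with the irreducibility of $Y$ forces $f$ to divide some $g_i$. The crux is to prove that $f$ divides some $g_j$ to an odd power. The plan is a proof by contradiction: assume that every $g_i=f^{k_i}h_i$ with $f\nmid h_i$ has $k_i$ even. At a generic smooth point $q\in Y(\R)$ with $h_i(q)\neq 0$ for all $i$, the sign of each $g_i$ in a small neighborhood of $q$ off $Y$ equals $\mathrm{sign}(h_i(q))$, and for $q$ to lie in $\partial S$ some $h_i(q)$ must be negative. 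But then $\{g_i\geq 0\}$ locally coincides with $Y$ near $q$, so $S$ has empty Euclidean interior in that neighborhood. Since $Y\cap\partial S$ is Zariski-dense in $Y$ by the component-of-Zariski-closure argument from (a), such a $q$ really exists, and it satisfies $q\in S\setminus\overline{\mathrm{int}(S)}$, contradicting the regularity of $S$.

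Having secured some $g_j=f^{2l+1}h_j$ with $f\nmid h_j$, the contradiction is delivered by a local sign-change argument at $p$. Fix a Euclidean ball $B(p,\epsilon)\subseteq\mathrm{int}(S)$ and choose a real point $q\in Y\cap B(p,\epsilon)$ with $h_j(q)\neq 0$; such $q$ exists because $V(h_j)\cap Y$ is a proper subvariety of $Y$ of dimension at most $n-2$, while $Y\cap B(p,\epsilon)$ is a smooth real $(n-1)$-manifold through $p$. In local analytic coordinates at $q$ with $f=y_n$, one has $g_j=y_n^{2l+1}h_j$ with $h_j(q)\neq 0$, so $g_j$ changes sign across $Y$ in every neighborhood of $q$, contradicting $q\in\mathrm{int}(S)\subseteq\{g_j\geq 0\}$. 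The main obstacle is the odd-multiplicity claim, whose proof genuinely requires regularity: a single constraint of the form $g=f^2h$ with $h$ negative on part of $Y$ otherwise yields a basic closed but non-regular set whose algebraic boundary contains $Y$ and meets its interior along $Y$, so the regularity hypothesis is indispensable at exactly that step.
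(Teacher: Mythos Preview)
Your proof is correct and follows the same overall strategy as the paper's. For (a), both arguments use that $\partial S$ locally separates two non-empty open sets to force $\dim_p\partial S=n-1$ at every boundary point (the paper invokes \cite{BochnakMR1659509}, Lemma~4.5.2, for this step) and then pass to the Zariski closure componentwise; for (b), both reduce to showing that the irreducible equation $f$ of the component $Y$ through $p$ divides some $g_j$ to an odd power and then derive a sign-change contradiction inside $\mathrm{int}(S)$.

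The one organizational difference is in how the odd-multiplicity claim is established. The paper first cancels even powers of $h$ from every $g_k$ and then uses Zariski-density of $\partial S\cap V(h)_{\rm reg}(\R)$ in $V(h)$ to find an index $j$ with $h\mid g_j$ (hence odd multiplicity in the original $g_j$). You instead argue directly by contradiction: if all multiplicities were even, a generic boundary point $q$ on $Y$ would have $S$ locally contained in $Y$, violating regularity of $S$. Your route makes the role of the regularity hypothesis more transparent, and your final perturbation from $p$ to a nearby $q\in Y\cap\mathrm{int}(S)$ with $h_j(q)\neq 0$ is slightly more careful than the paper's blanket assertion that $g_j$ changes sign at \emph{every} point of $V(h)_{\rm reg}(\R)$.
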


\begin{proof}
(a) By \cite{BochnakMR1659509}, Proposition 2.8.13, $\dim(\partial S)\leq n-1$. Conversely, we prove that every point in the boundary $\partial S$ of $S$ has local dimension $n-1$ in $\partial S$: Let $x\in\partial S$ be a point and take $\epsilon >0$. Then ${\rm int}(S)\cap {\rm B}(x,\epsilon)$ and ${\rm int}(\R^n\setminus S)\cap {\rm B}(x,\epsilon)$ are non-empty, because both $S$ and $\R^n\setminus S$ are regular. Applying \cite{BochnakMR1659509}, Lemma 4.5.2, yields that
\[
\dim(\partial S\cap {\rm B}(x,\epsilon)) = \dim({\rm B}(x,\epsilon)\setminus ({\rm int}(S)\cup (\R^n\setminus \ol{S}))) \geq n-1
\]
Therefore, all components of $\partial_a S={\rm cl}_{Zar}(\partial S)$ have dimension $n-1$.\\
(b) Assume that $S$ is basic closed, i.e.~there are polynomials $g_1,\ldots,g_r\in\Pol{x}{\R}$ such that $S=\{x\in\R^n\colon g_1(x)\geq 0,\ldots,g_r(x)\geq 0\}$. Let $h\in\Pol{x}{\R}$ be a polynomial defining an irreducible component of $\partial_a S$ intersecting the interior of $S$ in a regular point. Then there is an index $j\in\{1,\ldots,r\}$ and an $n\in\N$ such that $h^{2n-1}$ divides $g_j$ and $h^{2n}$ does not divide $g_j$: By canceling squares, we assume for all $k\in\{1,\ldots,r\}$ that $h$ does not divide $g_k$ or $h$ divides $g_k$ and $h^2$ does not. There is a Zariski dense subset $M\subset\V(h)_{\rm reg}(\R)$ that is contained in $\partial S$. In every element of $M$, at least one polynomial $g_k\in\{g_1,\ldots,g_r\}$ must change sign. We conclude from $M=\bigcup_{k=1}^r\V(g_k)\cap M$ that
\[
\V(h)=\ol{M}^{Zar}\subset\bigcup_{k=1}^r \V(g_k)\cap \ol{M}^{Zar}
\]
and by the irreducibility of $\V(h)$ that $\V(h)\subset \V(g_j)$ for some $j\in\{1,\ldots,r\}$.\\
But from the fact that $h$ divides $g_j$ and $h^2$ does not, we see that $g_j$ changes sign in every point of $\V(h)_{\rm reg}(\R)$ and in particular in every point of the set $\V(h)_{\rm reg}\cap \textrm{int}(S)\neq \emptyset$, which is a contradiction to $S\subset \{x\in\R^n\colon g_j(x)\geq 0\}$.
\end{proof}

\begin{Exm}
Let $g:= x^2+y^2-1\in\R[x,y]$. The union of the closed disc $\{(x,y)\in\R^2\colon g(x,y)\leq 0\}$ with the line defined by $y=0$ is basic closed, defined by the inequality $y^2g\leq 0$ and the algebraic boundary of this union has two components, namely the circle $\V(g)$ and the line $\V(y)$. The origin is a regular point of this hypersurface. This shows that the assumption on $S$ being regular in the above lemma cannot be dropped in (b).\\
For statement (a), we just have to do the same example in $\R^3$: Write $h:=x^2+y^2+z^2-1\in\R[x,y,z]$. The union of the ball $\{(x,y,z)\in\R^3\colon h(x,y,z)\leq 0\}$ with the line defined by $y=0$ and $z=0$ is basic closed, defined by the two inequalities $y^2h\leq 0$ and $z^2h\leq 0$. The algebraic boundary of this union consists of the sphere $\V(h)$ and the line $\V(y,z)$. It is a hypersurface with a lower dimensional component.
\end{Exm}

We want to characterise, when the secant variety is a component of the algebraic boundary of the convex hull of a curve.
\begin{Def}
Let $X\subset\P^n$ be an embedded quasi-projective variety. A \emph{secant $k$-plane} to $X$ is a $k$-dimensional linear space in $\P^n$ that is spanned by $k+1$ points on $X$. The \emph{$k$-th secant variety} $S_k(X)$ of $X$ is the Zariski closure of the union of all secant $k$-planes to $X$.
\end{Def}

Before we can state the theorem, we want to observe that the set of all $k$-tuples of points spanning a face is semi-algebraic:
\begin{Rem}\label{Rem:saFace}
Let $N\subset\R^n$ be a semi-algebraic set and $r\in\N$. The set $M\subset N\times\ldots\times N$ of the $r$-fold product of $N$ which contains all $r$-tuples of points whose convex hull is a face of the convex hull of $N$ is a semi-algebraic set: The set $M$ is the set of all points where a first order formula in the language of ordered fields is satisfied, namely the definition of a face, i.e.~for all $x,y\in\conv(N)$, if $\frac{1}{2}(x+y)$ is in the convex hull of the free variables $x_1,\ldots,x_r\in\R^n$, then so are $x$ and $y$.
\end{Rem}

We now come to the most important result of this section. It will be used in the following sections to show that the secant variety is an irreducible component of the algebraic boundary of certain $\SO(2)$-orbitopes. It is stated for convex hulls of not necessarily rational curves.
\begin{Thm}\label{Thm:AlgBoundOrb}
Let $X\subset\A^{2r}$ be an irreducible curve and assume that the real points $X(\R)$ of $X$ are Zariski-dense in $X$.
Let $C$ be the convex hull of $X(\R)\subset\R^{2r}$ and suppose that the interior of $C$ is non-empty. Let $M\subset X\times X\times\ldots\times X$ be the semi-algebraic subset of the $r$-fold product of $X$ defined as the set of all $r$-tuples of real points whose convex hull is a face of $C$. Then the $(r-1)$-th secant variety to $X$ is an irreducible component of the algebraic boundary of $C$ if and only if the dimension of $M$ is $r$.
\end{Thm}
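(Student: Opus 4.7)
My plan is to parametrize the contribution of tuples in $M$ to $\partial C$ through the map
\[
\pi\colon M\times\Delta^{r-1}\to\R^{2r},\qquad\bigl((x_1,\dots,x_r),\lambda\bigr)\mapsto\sum_{i=1}^r\lambda_i x_i,
\]
where $\Delta^{r-1}$ is the closed standard simplex, and to compare the Zariski closure of its image with $S_{r-1}(X)$. I will use the following standing facts: $M$ is semi-algebraic by Remark~\ref{Rem:saFace} and contained in $X(\R)^r$, so $\dim M\le r$; $\partial_a C$ is pure of codimension one by Lemma~\ref{Lem:BasicClosed}(a); and $S_{r-1}(X)$ is irreducible of complex dimension at most $2r-1$, being the image of the irreducible variety $X^r\times\{\lambda\in\A^r\colon\sum\lambda_i=1\}$ under $((x_i),\lambda)\mapsto\sum\lambda_i x_i$.

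For the direction ``$\Leftarrow$'', assuming $\dim M=r$, the semi-algebraic set $M$ has full dimension inside the $r$-dimensional $X(\R)^r$ and hence contains a Euclidean-open subset; since $X$ is non-degenerate in $\A^{2r}$ (because $C$ has non-empty interior), the locus of affinely dependent $r$-tuples is a proper Zariski-closed subset of $X^r$, so a generic tuple in $M$ is affinely independent and spans an $(r-1)$-simplex face of $C$. Consequently $\pi$ has generically finite fibres, its image has dimension $r+(r-1)=2r-1$, and this image is contained in $\partial C\cap S_{r-1}(X)(\R)$. The Zariski closure of the image is therefore a $(2r-1)$-dimensional subvariety of the irreducible $(2r-1)$-dimensional $S_{r-1}(X)$, hence equals $S_{r-1}(X)$, so $S_{r-1}(X)\subseteq\partial_a C$; by purity of $\partial_a C$ in codimension one it is an irreducible component.

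For the direction ``$\Rightarrow$'', assume $S_{r-1}(X)$ is an irreducible component of $\partial_a C$, so $\dim S_{r-1}(X)=2r-1$ and the semi-algebraic dimension of $S_{r-1}(X)(\R)\cap\partial C$, which is Zariski-dense in $S_{r-1}(X)$, equals $2r-1$. I introduce the incidence set
\[
N=\bigl\{(p,x_1,\dots,x_r)\in\partial C\times X(\R)^r\colon(x_i)\in M,\;p\in\conv(x_1,\dots,x_r)\bigr\}
\]
with the two projections $\alpha\colon N\to\partial C$ and $\beta\colon N\to M$; the fibres of $\beta$ have dimension at most $r-1$, giving $\dim N\le\dim M+(r-1)$. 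If one can show that $\alpha$ covers a generic (hence $(2r-1)$-dimensional) subset of $S_{r-1}(X)(\R)\cap\partial C$, then $2r-1\le\dim N\le\dim M+(r-1)$ forces $\dim M\ge r$. For a generic $p\in S_{r-1}(X)(\R)\cap\partial C$: $p\notin S_{r-2}(X)$, so the unique secant $(r-1)$-plane $L_p$ through $p$ exists and is defined over $\R$ by uniqueness under conjugation; and $p$ admits a minimal convex-combination representation as a sum of $s$ extreme points of $C$ belonging to a common face. The case $s<r$ is excluded by $p\notin S_{r-2}(X)$, and the case $s=r$ gives, via the supporting-hyperplane argument (the $(r-1)$-simplex on the real vertices contains $p$ in its relative interior, hence sits inside any supporting hyperplane at $p$ and by minimality coincides with the smallest face through $p$), a tuple in $M$ containing $p$.

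The main obstacle is the case $s>r$: I need to show that the stratum of $\partial C$ consisting of points whose minimal face has more than $r$ real vertices does not contribute the full $(2r-1)$ dimensions to $\partial C\cap S_{r-1}(X)$. The key observation is that when a face $F$ of dimension $s-1>r-1$ is an $(s-1)$-simplex on real vertices $y_1,\dots,y_s$, any $r$-subset of these vertices already spans a face of $F$ and hence a face of $C$, so already contributes to $M$; a dimension count on the natural forget-coordinate projections from the parameter space of such $s$-tuples to $M$, combined with the bound that the affine hull of such a face meets $S_{r-1}(X)$ in codimension one, should conclude that either $\dim M\ge r$ already, or the higher-$s$ strata together contribute strictly less than $(2r-1)$ dimensions.
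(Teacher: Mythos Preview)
Your argument for ``$\Leftarrow$'' (if $\dim M=r$ then $S_{r-1}(X)$ is a component of $\partial_a C$) is essentially the paper's: the same parametrising map $\pi$ (the paper calls it $\Phi$ and first restricts to affinely independent tuples $M_0=M\setminus V(\R)$), generic finiteness of fibres over $S_{r-1}(X)$, and the resulting dimension count $r+(r-1)=2r-1$ for the image inside $S_{r-1}(X)\cap\partial C$. This direction is fine.

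Your ``$\Rightarrow$'' direction has a genuine gap, which you yourself flag. Two specific problems. First, in the case $s>r$ you assume the minimal face $F$ through $p$ is a simplex so that every $r$-subset of its vertices spans a subface; nothing in the hypotheses forces $F$ to be simplicial, so this assumption is unjustified. Second, even granting simpliciality, the ``dimension count on the natural forget-coordinate projections'' is not carried out, and it is not clear it yields $\dim M\ge r$: an $e$-dimensional family of $(s-1)$-simplex faces produces only an at-most-$e$-dimensional family of $(r-1)$-subfaces after projection, and the lower bound on $e$ coming from $\dim U=2r-1$ need not reach $r$ once you intersect the faces with the hypersurface $S_{r-1}(X)$. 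The phrase ``should conclude'' is exactly where the proof stops being a proof.

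The paper's route for ``$\Rightarrow$'' bypasses your Carath\'eodory case analysis on $s$. Rather than examining the minimal face through each boundary point, it argues globally: if $S_{r-1}(X)$ is a component of $\partial_a C$ then $\dim\bigl(S_{r-1}(X)(\R)\cap\partial C\bigr)=2r-1$, so some Euclidean ball ${\rm B}(x,\epsilon)\cap S_{r-1}(X)(\R)$ lies entirely inside $\partial C$. Such a ball is Zariski-dense in $S_{r-1}(X)$ and therefore meets every nonempty Zariski-open subset of $S_{r-1}(X)$, in particular the constructible locus of points lying on honest secant $(r-1)$-planes. From this the paper obtains that (after possibly adjusting $x$ and $\epsilon$) a full $(2r-1)$-dimensional piece of $\partial C$ lies in the image of $\Phi$, and then reads off $\dim M_0=r$ from the same generic-finite-fibre argument you used in the other direction. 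The point is that the paper reduces the question to the equivalence $\dim(\mathrm{im}\,\Phi)=2r-1\iff\dim(S_{r-1}(X)\cap\partial C)=2r-1$, rather than trying to lift individual boundary points through a face decomposition.
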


\begin{proof}
The $(r-1)$-th secant variety $S_{r-1}(X)$ to $X$ is a hypersurface (cf.~\cite{LangeMR744323}), because it follows from the assumption that $C$ has non-empty interior that the curve is not contained in any hyperplane. Note that $S_{r-1}(X)$ is irreducible as the secant variety to an irreducible curve. It is contained in the algebraic boundary of $C$ if and only if the dimension of its intersection $S_{r-1}(X)\cap\partial C$ with the boundary of $C$ has codimension $1$ as a semi-algebraic set.\\
Set $M_0:=M\setminus V(\R)$ where $V\subset X\times\ldots\times X$ is the subvariety of all $r$-tuples of points on $X$ which are affinely dependent. If it is non-empty, it is a semi-algebraic set of dimension $\dim(M)$. Consider the map
\[
\Phi\colon \left\{
\begin{array}[]{ccc}
M_0\times\Delta_{r-1} & \to & \R^{2r} \\
((x_1,\ldots,x_r),(\lambda_1,\ldots,\lambda_r)) & \mapsto & \sum_{i=1}^r \lambda_i x_i
\end{array}\right.
\]
This is a semi-algebraic map and the image under $\Phi$ of $M_0\times\Delta_{r-1}$ is contained in the intersection $S_{r-1}(X)\cap\partial C$ by definition of $M_0$. We claim that $\dim(\Phi(M_0\times\Delta_{r-1}))=2r-1$ if and only if $\dim(S_{r-1}(X)\cap\partial C)=2r-1$:
If the dimension of $S_{r-1}(X)\cap\partial C$ is $2r-1$, then there exist $x\in S_{r-1}(X)\cap\partial C$ and $\epsilon>0$ such that ${\rm B}(x,\epsilon)\cap S_{r-1}(X)$ is contained in $\partial C$. Since it is also dense in $S_{r-1}(X)$, every Zariski-open subset of $S_{r-1}(X)$ intersects this set in a non-empty set, which is then open in the euclidean topology.
So $S_{r-1}(X)\cap\partial C$ contains general points of the $(r-1)$-th secant variety. Since the union of all secant $(r-1)$-planes to $X$ is a constructible set in the Zariski topology, it contains a Zariski open subset of the $(r-1)$-th secant variety. Therefore, there is a point $x\in S_{r-1}(X)\cap \partial C$ which lies on a secant $(r-1)$-plane to $X$ and an $\epsilon >0$ such that ${\rm B}(x,\epsilon)\cap S_{r-1}(X)$ is contained in the euclidean boundary of $C$ and the image of $\Phi$, i.e.~these points all lie on secant $(r-1)$-planes to $X$ spanned by real points. Therefore, the image of $\Phi$ has dimension $2r-1$.
The converse of the claimed equivalence is trivial, because $\Phi(M_0\times\Delta_{r-1})\subset S_{r-1}(X)\cap\partial C$.
From the claim, it follows that, if $S_{r-1}(X)\subset\partial_a C$, the dimension of $M_0$ is $r$ by a count of dimensions in the source of $\Phi$ and \cite{BochnakMR1659509}, Theorem 2.8.8.\\
Conversely, assume that the dimension of $M_0$ is $r$.
Denote by ${\rm Gr}(\Phi)$ the graph of the map $\Phi$ in $(M_0\times\Delta_{r-1})\times\R^{2r}$ and by $\pi_2$ the projection of this product to the second factor $\R^{2r}$. The fibre of a generic real point in $S_{r-1}(X)$ under this projection is finite, because a general point on this secant variety lies on only finitely many secant $(r-1)$-planes to $X$. This implies that the image of $\Phi$, which is the same as $\pi_2({\rm Gr}(\Phi))$, is locally homeomorphic to the graph of $\Phi$. This can be seen by a cylindrical decomposition of the semi-algebraic set ${\rm Gr}(\Phi)$ adapted to the projection $\pi_2$: Over every open cell of the decomposition of $S_{r-1}(X)(\R)$ into semi-algebraic sets, there are only graphs and no bands, so the projection $\pi_2$ is a local homeomorphism of ${\rm Gr}(\Phi_0)$ with the image of $\Phi$.
Since the graph of $\Phi$ is in turn homeomorphic to the source of $\Phi$, it follows, that the dimension of $\Phi(M_0\times\Delta_{r-1})\subset S_{r-1}(X)\cap\partial C$ is $r+r-1=2r-1$.
\end{proof}

We will mostly use this more explicit corollary to the above theorem.
\begin{Cor}\label{Cor:AlgBoundOrb}
Let $X\subset\A^{2r}$ be an irreducible curve and assume that the real points of $X$ are Zariski-dense in $X$. Set $C:=\conv(X(\R))\subset\R^{2r}$ and suppose that $C$ has non-empty interior. Then the $(r-1)$-th secant variety to $X$ is an irreducible component of the algebraic boundary of $C$ if and only if there are $r$ real points $x_1,\ldots,x_{r}\in X(\R)$ of $X$ and semi-algebraic neighbourhoods $U_j\subset X(\R)$ of $x_j$ for $j=1,\ldots,r$ such that for all $(y_1,\ldots,y_{r})\in U_1\times\ldots\times U_{r}$, the convex hull $\conv(y_1,\ldots,y_{r})$ of these points is a face of $C$.
\end{Cor}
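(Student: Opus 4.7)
The plan is to reduce this corollary to Theorem \ref{Thm:AlgBoundOrb} by translating the dimension condition $\dim M = r$ into the existence of a product neighborhood inside $M$. First I would record that $X(\R)$ is a $1$-dimensional semi-algebraic set: since $X$ is an irreducible curve and $X(\R)$ is Zariski-dense in $X$, one has $\dim_\R X(\R) = \dim_\C X = 1$ (see \cite{BochnakMR1659509}, Theorem 7.6.1). Consequently $X(\R)^r$ is a semi-algebraic set of dimension $r$, and $M \subset X(\R)^r$.

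For the easier direction, assume that real points $x_1,\ldots,x_r \in X(\R)$ and semi-algebraic neighbourhoods $U_j \subset X(\R)$ of $x_j$ are given with $U_1\times\cdots\times U_r \subset M$. Each $U_j$ contains a regular real point of $X$ (since singular real points are isolated, hence $0$-dimensional in $X(\R)$), so $\dim U_j = 1$ and therefore $\dim(U_1\times\cdots\times U_r) = r$. This gives $\dim M \geq r$; since $M$ is contained in the $r$-dimensional set $X(\R)^r$, we have $\dim M = r$, and Theorem \ref{Thm:AlgBoundOrb} concludes that $S_{r-1}(X)$ is an irreducible component of $\partial_a C$.

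For the converse, assume $S_{r-1}(X)$ is a component of $\partial_a C$, so that $\dim M = r$ by Theorem \ref{Thm:AlgBoundOrb}. The key observation is that a semi-algebraic subset of $X(\R)^r$ whose dimension equals that of the ambient set has non-empty interior relative to $X(\R)^r$: by \cite{BochnakMR1659509}, the closure minus the interior of a semi-algebraic set has strictly smaller dimension, so if $M$ had empty relative interior in $X(\R)^r$, its dimension would drop to at most $r-1$. Pick $(x_1,\ldots,x_r)$ in the relative interior of $M$, chosen so that each $x_j$ lies in the (dense, open) $1$-manifold locus of $X(\R)$. At such a point $X(\R)^r$ is locally a product of $1$-dimensional intervals around the $x_j$, and since the product topology on $X(\R)^r$ coincides with its subspace topology, one can shrink any Euclidean neighbourhood of $(x_1,\ldots,x_r)$ inside $M$ to a semi-algebraic box $U_1\times\cdots\times U_r$ with $U_j$ a semi-algebraic neighbourhood of $x_j$ in $X(\R)$. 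This box is contained in $M$, which is precisely the asserted condition.

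The only mildly delicate step is the passage from $\dim M = r$ to the existence of an honest product neighbourhood in $M$; everything else is a direct application of the theorem and of basic dimension theory for semi-algebraic sets. Once one knows $M$ has non-empty interior inside $X(\R)^r$ and uses local product structure at a smooth point of $X(\R)^r$, the product neighbourhood $U_1\times\cdots\times U_r$ drops out immediately.
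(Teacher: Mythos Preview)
Your proposal is correct and follows essentially the same route as the paper: both reduce to Theorem~\ref{Thm:AlgBoundOrb} and then argue that $\dim M = r$ is equivalent to $M$ containing a product $U_1\times\cdots\times U_r$ of open semi-algebraic subsets of $X(\R)$, using that the euclidean topology on $X(\R)^r$ is the product topology. The paper dispatches this equivalence in a single sentence; you have spelled out the two directions (dimension count for the product, and the passage from full dimension to non-empty relative interior at a smooth point) in more detail, which is entirely in the spirit of the original argument.
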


\begin{proof}
That the $(r-1)$-th secant variety to $X$ is an irreducible component of $\partial_a C$ means that $M$ as in the above notation has dimension $r$. The euclidean topology of $X(\R)\times\ldots\times X(\R)$ is the product topology. So $M$ contains a set of the form $U_1\times\ldots\times U_r$ for open semi-algebraic sets $U_j\subset X$ if and only if it has dimension $r$.
\end{proof}

For the universal $\SO(2)$-orbitopes, the result \cite{SS}, Theorem 5.2, of Sanyal, Sottile and Sturmfels gives a complete description of the algebraic boundary (see also \ref{Exm:UniOrb}).
\begin{Exm}\label{Exm:UniBound}
The algebraic boundary of the universal $\SO(2)$-orbitope of dimension $2n$ is defined by the vanishing of the determinant
\[
\det\left(\begin{array}[]{cccc}
1 & x_1+iy_1 & \dots & x_n+iy_n \\
x_1-iy_1 & 1 & \ddots & \vdots \\
\vdots & \ddots & \ddots & x_1+iy_1 \\
x_n-iy_n & \ldots & x_1-iy_1 & 1 \\
\end{array}\right)
\]
as a polynomial in the variables $x_1,\ldots,x_n,y_1,\ldots,y_n$. It has real coefficients and is the (dehomogenization of the) equation of the $(n-1)$th secant variety to the curve $\bar{X_n}$ associated with $C_n$.\\
More generally, for $k<n$, the $k$-th secant variety to the curve $\bar{X}_n$ is defined by the $(k+2)\times(k+2)$ minors of that matrix. The union of all $k$-dimensional faces of $C_n$ is Zariski dense in the $k$-th secant variety to $\bar{X}_n$.
\end{Exm}

We take a closer look at the real points of the secant variety. We eventually show that every real points on a secant spanned by regular real points is a central point.
\begin{Def}
Let $X$ be an abstract variety. A real point $x\in X(\R)$ of $X$ is called a \emph{central point} of $X$, if it has full local dimension in the set of real points, i.e.
\[
\dim_x(X(\R))=\dim(X)
\]
\end{Def}

\begin{Rem}
Let $X\subset\P^n$ be an irreducible variety. Assume that the real points of $X$ are Zariski-dense in $X$. Let $x\in X(\R)$ be a real point. Then $x$ is a central point of $X$ if and only if there is a regular real point of $X$ in every euclidean neighbourhood of $x$.
This is a consequence of the Artin-Lang Theorem (cf.~\cite{Scheiderer2011arXiv1104.1772S}, Corollary 1.4, or \cite{KnebuschMR1029278}, Theorem II.3, Theorem III.7): If $x$ has full local dimension in the real points of $X$, then the Zariski closure of ${\rm B}(x,\epsilon)\cap X(\R)$ is $X$ for all $\epsilon > 0$ by definition of the local dimension. In particular, this neighbourhood of $x$ must contain a regular point of $X$. Conversely, if every neighbourhood of $x$ contains a regular point of $X$, then every neighbourhood has dimension $\dim(X)$ by the Artin-Lang Theorem.\\
For an alternative proof of this assertion, using the real spectrum of a ring, see \cite{BochnakMR1659509}, chapter 7, section 6.
\end{Rem}

\begin{Cor}\label{Cor:localdim}
Let $X\subset\P^n$ be an irreducible variety that is not contained in any hyperplane. Assume that the real points of $X$ are Zariski-dense in $X$. Take $x_0,\ldots,x_k\in X_{\rm reg}(\R)$ to be regular real points of $X$ that span a secant $k$-plane $\Lambda$ to $X$. Then every real point $y\in \Lambda$ is a central point of the $k$-th secant variety:
\[
\dim_y(S_k(X)(\R))=\dim(S_k(X))
\]
In particular, the union of all $k$-dimensional real projective spaces spanned by $k+1$ real points of $X$ is a Zariski-dense subset of $S_k(X)$.
\end{Cor}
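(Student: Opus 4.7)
The plan is to apply the criterion from the preceding Remark: a real point $y\in S_k(X)(\R)$ is central in $S_k(X)$ if and only if every euclidean neighbourhood of $y$ contains a regular real point of $S_k(X)$. I will produce such regular real points arbitrarily close to $y$ by perturbing the spanning tuple of $\Lambda$.

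Writing $y=[\sum_{i=0}^k\mu_i x_i]$ for some $[\mu]\in\P^k(\R)$, consider the rational map
\[
\Phi\colon X^{k+1}\times\P^k\ratto S_k(X),\qquad ((y_0,\ldots,y_k),[\lambda])\mapsto\left[\sum_i\lambda_i y_i\right],
\]
which is dominant by definition of $S_k(X)$. Let $W\subset X^{k+1}\times\P^k$ be the Zariski open subset on which each $y_i$ lies in $X_{\rm reg}$, the tuple $y_0,\ldots,y_k$ is linearly independent (so that $\Phi$ is defined), and $\Phi$ maps the tuple into the smooth locus $S_k(X)_{\rm reg}$. Each of the three conditions cuts out a non-empty Zariski open subset of the irreducible variety $X^{k+1}\times\P^k$ (the third by dominance of $\Phi$ and density of $S_k(X)_{\rm reg}$ in $S_k(X)$), so $W$ itself is a non-empty Zariski open subset.

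The point $p=(x_0,\ldots,x_k,[\mu])$ is a regular real point of $X^{k+1}\times\P^k$, and this product has Zariski-dense real points since $X$ does. The decisive step is that every euclidean neighbourhood of $p$ meets $W(\R)$: locally near $p$, the complement of $W$ is the common zero set of polynomials $f_1,\ldots,f_r$ not all vanishing on $X^{k+1}\times\P^k$, and applying the real-analytic identity principle to the smooth manifold $X_{\rm reg}(\R)^{k+1}\times\P^k(\R)$ near $p$ shows that a non-vanishing $f_j$ cannot be identically zero on a euclidean neighbourhood of $p$. Thus real points $p'\in W(\R)$ exist arbitrarily close to $p$; for any such $p'$, the image $\Phi(p')\in S_k(X)_{\rm reg}(\R)$ is a regular real point of $S_k(X)$, and by continuity of $\Phi$ at $p$ it can be made arbitrarily close to $\Phi(p)=y$. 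The criterion from the Remark then yields $\dim_y S_k(X)(\R)=\dim S_k(X)$.

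For the ``in particular'' statement, the union of all real $k$-dimensional projective spaces spanned by $(k+1)$-tuples of real points of $X$ equals $\Phi(X(\R)^{k+1}\times\P^k(\R))$; since $X(\R)$ is Zariski dense in $X$, the source is Zariski dense in $X^{k+1}\times\P^k$, and applying $\Phi$ produces a Zariski dense subset of $S_k(X)$. The main obstacle in the whole argument is the density step for $W(\R)$, which is the same Artin-Lang-type real density phenomenon that underlies the preceding Remark; once granted, the rest of the argument is a formal combination of continuity with the openness of the smooth locus.
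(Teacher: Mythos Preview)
Your proof is correct and follows the same overall strategy as the paper: perturb the spanning tuple of $\Lambda$ to a general configuration using Artin--Lang density of regular real points, observe that the resulting point on $S_k(X)$ is regular, and conclude via the centrality criterion from the preceding Remark. The one difference worth noting is that the paper invokes Terracini's Lemma to locate regular points of $S_k(X)$ on a general secant plane and then appeals to upper semi-continuity of local dimension, whereas you bypass both by packaging everything into the dominant rational map $\Phi$: since $\Phi$ is dominant, $\Phi^{-1}(S_k(X)_{\rm sing})$ is automatically a proper Zariski-closed subset, so it can be avoided by the same real-density argument without ever knowing where on a given secant plane the regular points sit. This is a mild but genuine simplification---Terracini gives more precise information, but for the centrality statement dominance alone suffices.
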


\begin{proof}
The statement follows from upper semi-continuity of the local dimension, if the points $x_0,\ldots,x_k$ are general, because in that case, Terracini's Lemma (cf.~\cite{FlenMR1724388}, Proposition 4.3.2) says that the general point on the secant $k$-plane spanned by these points is a regular point of $S_k(X)$. And upper semi-continuity of the local dimension follows for example from the fact, that every closed semi-algebraic set can be locally triangulated (cf.~\cite{BochnakMR1659509}, section 9.2, Theorem 9.2.1).\\
If we take regular points $x_0,\ldots,x_k\in X_{\rm reg}(\R)$, then, since the real points of the curve $X$ are Zariski-dense in $X$, we can find for every $\epsilon>0$ a tuple $x_0',\ldots,x_k'\in X_{\rm reg}(\R)$ of general real points such that $\|x_j-x_j'\|<\epsilon$ (the reason is that ${\rm B}(x_j,\epsilon)\cap X(\R)$ is Zariski-dense in $X$ for all $\epsilon>0$ by the Artin-Lang theorem).\\
Now, if $y=\sum_{j=0}^k\lambda_j x_j$ with $\lambda_j\in\R$, $\sum_{j=0}^k\lambda_j=1$, then
\[
\|y-\sum_{j=0}^k\lambda_jx_j'\| = \|\sum_{j=0}^k\lambda_j(x_j-x_j')\|\leq\sum_{j=0}^k\lambda_j\|x_j-x_j'\|<\sum_{j=0}^k\lambda_j\epsilon = \epsilon
\]
Therefore, we can find a regular real point of $S_k(X)$ in every euclidean neighbourhood of $y$. By the preceding remark, this is equivalent to the claim.
\end{proof}

\section{Four-dimensional $\SO(2)$-orbitopes}\label{sec:4dim}
Smilansky completely charaterized the face lattice of an arbitrary $4$-dimensional $\SO(2)$-orbitope.
Let $p,q\in\N$ be relatively prime integers with $p<q$, let $\rho=\rho_p\oplus\rho_q$ be the corresponding $4$-dimensional representation of $\SO(2)$. Denote by $C_{pq}$ the convex hull of the orbit of $(1,0,1,0)$.

There is a unique pair of integers $k,\ell\in\N$ with $0\leq k<p$, $1\leq \ell <q$ such that $lp-kq=1$. Denote by $I_{pq}$ the interval
\[
I_{pq}=\left(\frac{k}{p},\frac{\ell}{q}\right)\cup \left(\frac{q-\ell}{q},\frac{p-k}{p}\right)
\]

\begin{Rem}\label{Rem:IntervalIpq}
The closure of $I_{pq}$ is the whole interval $[0,1]$ if and only if $p=1$ and $q=2$. The following case analysis shows this.
\end{Rem}

Write $z\colon [0,1]\to\R^4$, $t\mapsto(\cos(2 p\pi t),\sin(2 p\pi t), \cos(2 q\pi t),\sin(2 q\pi t))$.

\begin{Thm}[\cite{SmiMR815607}, Theorem 1]\label{Thm:4dim-faces}
The proper exposed faces of the $4$-dimensional $\SO(2)$-orbitope $C_{pq}$ are:
\begin{itemize}
\item The points $(\cos(p\theta),\sin(p\theta),\cos(q\theta),\sin(q\theta))$ ($\theta\in[0,2\pi)$) of the orbit are the $0$-dimensional faces.
\item The line segments joining $z(s)$ and $z(t)$ for $0\leq s<t<1$ with $t-s\in I_{pq}$ are $1$-dimensional faces.
\item Sets of the form $\conv(\{z(t+\frac{j}{p})\colon j=0,\ldots,p-1\})$, $0\leq t<\frac{1}{p}$ are faces. If $p\geq 3$, then these faces are two-dimensional and regular $p$-gons. If $p=2$, these are one-dimensional faces, which were not listed above. If $p=1$, these are points on the orbit listed above.
\item Analoguously for $q$: Sets of the form $\conv(\{z(t+\frac{j}{q})\colon j=0,\ldots,q-1\})$, $0\leq t<\frac{1}{q}$ are faces. If $q\geq 3$, then these faces are two-dimensional and regular $q$-gons. If $q=2$, these are one-dimensional faces, which were not listed above.
\end{itemize}
There are no three-dimensional faces.\\
If $q\geq 3$ then $C_{pq}$ has non-exposed faces, namely the edges of the $q$-gons listed above. If $p$ is also greater than $2$, then the edges of the $p$-gons are also non-exposed faces of $C_{pq}$.
\end{Thm}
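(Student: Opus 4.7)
The plan is to work in complex coordinates: identify $\R^4 \cong \C^2$ via $(x_1, y_1, x_2, y_2) \mapsto (x_1 + iy_1, x_2 + iy_2)$, so that the orbit parametrizes as $z(t) = (e^{2\pi i p t}, e^{2\pi i q t})$ for $t \in [0, 1)$. By Proposition \ref{Prop:Extreme} the orbit is exactly the extreme set of $C_{pq}$, so every proper face is the convex hull of $\{z(t) : t \in F\}$ for $F$ the set of maximizers on $[0, 1)$ of some real linear functional on $\R^4$. Writing such a functional as $\mathrm{Re}(\bar\alpha u + \bar\beta v)$ and exploiting the $\SO(2)$-symmetry to rotate away one phase, the task reduces to classifying the global-maximum sets of
\[
f(t) \;=\; A \cos(2\pi p t) + B \cos(2\pi q t - \phi), \qquad A, B \ge 0, \ \phi \in [0, 2\pi).
\]

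The polygonal faces arise from the degenerate regimes: $B = 0$ gives maxima at $t = j/p$, hence the $p$-gon $\conv\{z(j/p) : 0 \le j < p\}$, and the full family $\conv\{z(t_0 + j/p)\}$ is recovered by $\SO(2)$-rotation; the special values $p = 1, 2$ collapse this to a vertex or edge. Symmetrically for $A = 0$ and the $q$-gons. The core claim in the generic regime $A, B > 0$ is that the global-maximum set of $f$ has size at most $2$, which rules out $2$-dimensional faces other than the polygonal ones and in particular forbids $3$-dimensional faces. This should be proved by combining a root count on the trigonometric polynomial $f'$ (which has at most $2\max(p, q)$ zeros on $[0, 1)$) with the observation that any configuration of three tied local maxima of $f$ forces algebraic relations on $(A, B, \phi)$ which push the triple of maximizers into an arithmetic progression modulo $1/p$ or $1/q$, i.e.\ back onto a polygonal face.

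The heart of the proof is then to determine for which gaps $v = t - s \in (0, 1)$ the pair $\{z(s), z(s+v)\}$ actually spans an exposed edge, and to identify this admissible set with $I_{pq}$. For each fixed $v$, the equations $f(s) = f(s+v)$ and $f'(s) = f'(s+v) = 0$ cut out a curve of normalized functionals realizing $\{s, s+v\}$ as critical at equal height; the pair is globally optimal exactly when no third point of $[0, 1)$ attains this height. As $v$ is varied, the first interfering third maximum crashes in from either the $p$-gon or the $q$-gon symmetric family, and a careful trigonometric computation using the Bezout relation $\ell p - k q = 1$ should pin the first $p$-fold obstruction to $v = k/p$ and the first $q$-fold obstruction to $v = \ell/q$, carving out the subinterval $(k/p, \ell/q)$. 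The symmetric subinterval $((q-\ell)/q, (p-k)/p)$ then appears via the involution $t \mapsto 1 - t$ on the orbit. Identifying these critical gaps with the optimal rational approximants to $p/q$ prescribed by Bezout will be the main obstacle.

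For the non-exposed face claim, note that every edge of a $p$-gon face ($p \ge 3$) connects consecutive vertices $z(t_0 + j/p)$ and $z(t_0 + (j+1)/p)$ and so corresponds to the gap $v = 1/p$. An elementary check on the Bezout data shows $1/p \notin I_{pq}$ whenever $p \ge 2$: either $k \ge 2$, in which case $1/p < k/p$ lies strictly below the first subinterval, or $k = 1$, in which case $1/p = k/p$ is the excluded left endpoint, and analogously for the second subinterval where $1/p$ sits in the closure only when $k = p - 1$. Hence no such edge appears among the exposed $1$-dimensional faces; being a face of the enclosing $p$-gon face, it is itself a face of $C_{pq}$, and so a non-exposed one. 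The argument for $q$-gon edges when $q \ge 3$ is identical with $v = 1/q$.
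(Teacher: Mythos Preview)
The paper does not prove this theorem: it is quoted verbatim from Smilansky \cite{SmiMR815607} and used as a black box. So there is no in-paper proof to compare against, and your proposal should be read as an attempted independent proof of Smilansky's result.

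As an outline your reduction is correct: exposed faces correspond to global-maximum sets of $f(t)=A\cos(2\pi pt)+B\cos(2\pi qt-\phi)$, the degenerate regimes $A=0$ or $B=0$ produce the polygonal faces, and the non-exposed edge claim follows once one checks $1/p,\,1/q\notin I_{pq}$ (your Bezout arithmetic there is essentially right). But the two load-bearing steps are not proved, only announced. First, the assertion that for $A,B>0$ the function $f$ has at most two global maxima: the root count on $f'$ gives at most $q$ local maxima, which is not enough, and the sentence about ``algebraic relations forcing an arithmetic progression'' is a hope, not an argument. Smilansky's actual proof here is a delicate analysis of how the local maxima of $f$ are ordered by height as $(A,B,\phi)$ varies, and this is where the relative primality of $p,q$ enters nontrivially. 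Second, the identification of the admissible gap set with $I_{pq}$ is the core of the theorem, and ``a careful trigonometric computation \ldots\ should pin the first $p$-fold obstruction to $v=k/p$'' is a statement of what needs to be shown, not a proof. The repeated ``should'' signals that you see these as obstacles; they are, and until they are filled in this remains a plausible plan rather than a proof.
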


\begin{Rem}
This theorem has interesting immediate consequences:\\
(a) A $4$-dimensional $\SO(2)$-orbitope $C_{pq}$ is simplicial (i.e.~all faces are simplices) if and only if $(p,q)=(1,2)$ or $(p,q)=(1,3)$.\\
(b) A four-dimensional $\SO(2)$-orbitope has non-exposed faces if and only if $(p,q)\neq (1,2)$. Combining this with the theorem of Sanyal, Sottile and Sturmfels about universal orbitopes (cf.~\cite{SS}, Theorem 5.2) and the fact that every face of a spectrahedron is exposed (cf.~\cite{GoldmanMR1342934}, Corollary 1), it is immediate, that a $4$-dimensional $\SO(2)$-orbitope is a spectrahedron if and only if it is universal. An even stronger statement is true (cf.~Corollary \ref{Cor:4dim}).
\end{Rem}

We investigate the algebraic boundary of $4$-dimensional $\SO(2)$-orbitopes.
\begin{Thm}\label{Thm:AlgBound4}
Let $p$ and $q$ be relatively prime integers, $q>p$. Choose the coordinates $\R^4\subset\A^4 = \{(w,x,y,z)\}$ and denote by $X_{pq}$ the curve associated with $C_{pq}$. The algebraic boundary of $C_{pq}$ is
\[
\partial_a C_{pq} = \left\{
\begin{array}[]{cc}
S_1(X_{pq}) & \text{ if } p=1, q=2 \\
S_1(X_{pq})\cup \V(y^2+z^2-1) & \text{ if } p\in\{1,2\}, q\geq 3 \\
S_1(X_{pq})\cup \V(w^2+x^2-1) \cup \V(y^2+z^2-1) & \text{ if } p\geq 3
\end{array}\right.
\]
\end{Thm}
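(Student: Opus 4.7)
The plan is to read off the irreducible components of $\partial_a C_{pq}$ directly from Smilansky's complete description of the face lattice (Theorem \ref{Thm:4dim-faces}), using Corollary \ref{Cor:AlgBoundOrb} for the secant component and a dimension-count argument for the two cylinders. I would first identify each component, then show that no further component can appear.

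To show $S_1(X_{pq}) \subseteq \partial_a C_{pq}$ in every case, I apply Corollary \ref{Cor:AlgBoundOrb} with $r = 2$: it suffices to exhibit two real points $x_1, x_2 \in X_{pq}(\R)$ together with semi-algebraic neighborhoods $U_1, U_2$ such that the convex hull of any pair in $U_1 \times U_2$ is a face of $C_{pq}$. The second bullet of Theorem \ref{Thm:4dim-faces} furnishes a $2$-parameter family of $1$-faces, namely the segments $\overline{z(s)z(t)}$ with $t-s \in I_{pq}$. Choosing $s_0 < t_0$ with $t_0 - s_0$ lying in the non-empty open interval $\bigl(\frac{k}{p}, \frac{\ell}{q}\bigr) \subset I_{pq}$ (non-empty because $\ell p - kq = 1 > 0$) and sufficiently small neighborhoods, Corollary \ref{Cor:AlgBoundOrb} yields that $S_1(X_{pq})$ is an irreducible component of $\partial_a C_{pq}$.

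For the cylinder $\V(w^2+x^2-1)$, assume $p \geq 3$. The third bullet of Theorem \ref{Thm:4dim-faces} supplies a $1$-parameter family of $2$-dimensional regular $p$-gons $P_t = \conv\{z(t+j/p) : j = 0, \ldots, p-1\}$ with $t \in [0, 1/p)$. Since $\cos(2\pi p(t+j/p)) = \cos(2\pi p t)$ and similarly for $\sin$, every vertex of $P_t$ has first two coordinates $(\cos(2\pi p t), \sin(2\pi p t))$, so $P_t \subset \V(w^2+x^2-1)$. Varying $t$ shows $\bigcup_t P_t$ is a $3$-dimensional semi-algebraic subset of the irreducible $3$-dimensional hypersurface $\V(w^2+x^2-1)$, hence Zariski-dense in it, so $\V(w^2+x^2-1)$ is an irreducible component of $\partial_a C_{pq}$. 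The symmetric argument with the $q$-gons handles $\V(y^2+z^2-1)$ when $q \geq 3$.

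To see that no other components occur, I invoke Lemma \ref{Lem:BasicClosed}(a): $\partial_a C_{pq}$ has pure codimension $1$. Since $C_{pq}$ is a compact convex body, every boundary point lies in some proper face, and Theorem \ref{Thm:4dim-faces} lists all faces (exposed and non-exposed). All $0$-faces lie on $X_{pq}(\R) \subset S_1(X_{pq})$, all $1$-faces are segments with endpoints on $X_{pq}(\R)$ and hence lie in $S_1(X_{pq})$, and the $2$-faces are $p$-gons (if $p \geq 3$) inside $\V(w^2+x^2-1)$ or $q$-gons (if $q \geq 3$) inside $\V(y^2+z^2-1)$. Thus $\partial C_{pq}$, and hence its Zariski closure $\partial_a C_{pq}$, is contained in the asserted union for each case on $(p,q)$. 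The main subtlety is in the edge cases: for $p = 2$ the \emph{$p$-gons} of Smilansky are $1$-faces whose union has dimension only $2$ in $\V(w^2+x^2-1)$, so the cylinder is not a component (these segments are already absorbed into $S_1(X_{pq})$); for $p = 1$ the \emph{$p$-gons} collapse to orbit points; and $q = 2$ forces $(p,q) = (1,2)$, giving the purely secant case.
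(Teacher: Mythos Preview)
Your proof is correct and follows essentially the same approach as the paper: identify the secant component via Corollary \ref{Cor:AlgBoundOrb} (the paper cites Theorem \ref{Thm:AlgBoundOrb} directly), identify the cylinders as Zariski closures of the $1$-parameter families of $p$- and $q$-gons, and rule out further components by observing that every face already lies in the asserted union. The only minor differences are that the paper handles the case $(p,q)=(1,2)$ by a direct appeal to \cite{SS}, Theorem~5.2, rather than folding it into the general argument, and that the paper makes the dimension count for $\bigcup_t P_t$ explicit via an injective semi-algebraic map $(0,1)\times\mathrm{relint}(\Delta_2)\to\R^4$ together with \cite{BochnakMR1659509}, Theorem~2.8.8, whereas you assert the dimension; since distinct proper faces of a convex body have disjoint relative interiors, your assertion is easily justified along the same lines.
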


\begin{proof}
The fact that the secant variety to the curve $X_{pq}$ associated with $C_{pq}$ is a component of the algebraic boundary of $C_{pq}$ follows from Theorem \ref{Thm:AlgBoundOrb} and the list of $1$-dimensional faces of $C_{pq}$ because there is always a $2$-dimensional family of edges.\\
The case of the universal $4$-dimensional orbitope, i.e.~$p=1$, $q=2$, follows from \cite{SS}, Theorem 5.2 (cf.~Example \ref{Exm:UniBound}).\\
Next, consider the case $p=1$ or $p=2$ and $q\geq 3$. Then the boundary of $C_{pq}$ consists of a $2$-dimensional family of edges and a $1$-dimensional family of regular $q$-gons.
The union of the $q$-gons is a semi-algebraic set of dimension $3$: Consider the semi-algebraic map
\[
\left\{
\begin{array}[]{c}
(0,1)\times{\rm relint}(\Delta_{2})\to \R^4\\
(t,(\lambda_0,\lambda_1,\lambda_2))\mapsto \lambda_0z(t)+\lambda_1z(t+\frac{1}{q})+\lambda_2z(t+\frac{2}{q})
\end{array}\right.
\]
which is injective, because $3$ vertices of a regular $q$-gon are affinely independent and the relative interiors of the $q$-gons in the boundary of $C_{pq}$ are disjoint. By \cite{BochnakMR1659509}, Theorem 2.8.8, it follows that the image has dimension $3$. To calculate the Zariski closure of this set, note that the last two components of the vectors $z(t)$, $z(t+\frac{1}{q})$ and $z(t+\frac{2}{q})$ are equal and therefore, the same is true for every element in the convex hull of these $3$ points. This implies, that the image is contained in the hypersurface $\V(y^2+z^2-1)$, which is irreducible. Therefore, the Zarsiki closure of the image is this hypersurface.
This shows $S_1(X_{pq})\cup\V(y^2+z^2-1)\subset\partial_a C_{pq}$ and since every face of $C_{pq}$ is contained in this variety, there are no further components in this case.\\
The case $p\geq 3$ is completely analoguous to the last case. The new component $\V(w^2+x^2-1)$ is the Zariski closure of the regular $p$-gons that lie in the boundary of $C_{pq}$.
\end{proof}

\begin{Cor}\label{Cor:4dim}
Let $C$ be a $4$-dimensional $\SO(2)$-orbitope. The following are equivalent:\\
(a) $C$ is linearly isomorphic to the universal $\SO(2)$-orbitope $C_2$.\\
(b) $C$ is a spectrahedron.\\
(c) $C$ is a basic closed semi-algebraic set.
\end{Cor}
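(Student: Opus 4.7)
The implications $(a)\Rightarrow (b)$ and $(b)\Rightarrow (c)$ are immediate: $C_2$ is the Sanyal--Sottile--Sturmfels spectrahedron of positive semi-definite Hermitian Toeplitz matrices (cf.~Example \ref{Exm:UniOrb}), and every spectrahedron is cut out by the non-negativity of the principal minors of its defining matrix pencil. The substantive content is the converse $(c)\Rightarrow (a)$, which I would prove by contraposition. By Remark \ref{Rem:OrbitChoice} I may assume $C=C_{pq}$ for some relatively prime integers $1\le p<q$, so the task is to show that if $(p,q)\neq (1,2)$ then $C_{pq}$ is not basic closed. Since Theorem \ref{Thm:AlgBound4} exhibits $S_{1}(X_{pq})$ as an irreducible component of $\partial_a C_{pq}$, by Lemma \ref{Lem:BasicClosed}(b) it is enough to produce a regular real point of $S_{1}(X_{pq})$ inside $\textrm{int}(C_{pq})$.

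By Remark \ref{Rem:IntervalIpq} the hypothesis $(p,q)\neq (1,2)$ gives $\overline{I_{pq}}\subsetneq [0,1]$, so I fix a non-empty open interval $J\subset (0,1)\setminus\overline{I_{pq}}$ and pick generic $s\in (0,1)$ and $\tau\in J$; set $t:=s+\tau$. By Smilansky (Theorem \ref{Thm:4dim-faces}) the segment $[z(s),z(t)]$ is not an edge of $C_{pq}$, because $\tau\notin I_{pq}$, and for generic $s,\tau$ it is not contained in any $p$- or $q$-gon face either, since such containment would force $\tau\in\{j/p\}\cup\{j/q\}$. A dimension count against the $3$-dimensional boundary $\partial C_{pq}$ in $\R^4$ then shows that, for generic $s,\tau$, the line through $z(s)$ and $z(t)$ meets $\partial C_{pq}$ in a $0$-dimensional (hence finite) set: Smilansky's classification writes $\partial C_{pq}$ as the union of the $3$-dimensional family of edges (parameterized by $\tau'\in I_{pq}$), the $3$-dimensional families of $p$- and $q$-gon faces, and the $1$-dimensional orbit, each of which a generic line in $\R^4$ meets only in a $0$-dimensional set. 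Since the segment lies in $C_{pq}$ by convexity, the set
\[
\Lambda:=\{\lambda\in (0,1): \lambda z(s)+(1-\lambda) z(t)\in \textrm{int}(C_{pq})\}
\]
is therefore cofinite in $(0,1)$, in particular non-empty and open.

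To conclude I would apply Terracini's Lemma exactly as in the proof of Corollary \ref{Cor:localdim}: the secant variety $S_{1}(X_{pq})$ is a hypersurface of dimension $3$ (cf.~the proof of Theorem \ref{Thm:AlgBoundOrb}), the join of two tangent lines at generic points of $X_{pq}$ is itself $3$-dimensional, and hence a general point of a generic secant is smooth on $S_{1}(X_{pq})$. Choosing $\lambda\in\Lambda$ generically, the point $\lambda z(s)+(1-\lambda) z(t)$ is simultaneously a regular real point of $S_{1}(X_{pq})$ and an element of $\textrm{int}(C_{pq})$, so Lemma \ref{Lem:BasicClosed}(b) forbids $C_{pq}$ from being basic closed, completing the contraposition. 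The delicate step is the interior claim: the real work is to combine Smilansky's exhaustive face enumeration with the generic transversality of our line to $\partial C_{pq}$ in $\R^4$, so that, once we restrict to $\tau\in J$ to rule out the edge portion of $\partial_a C_{pq}$, the open segment $(z(s),z(t))$ cannot be swallowed entirely by the remaining $3$-dimensional boundary pieces.
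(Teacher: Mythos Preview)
Your strategy matches the paper's: reduce to Lemma~\ref{Lem:BasicClosed}(b) by exhibiting a regular real point of $S_1(X_{pq})$ in the interior, and locate such a point on a secant segment $[z(s),z(t)]$ with $\tau=t-s$ chosen outside $\overline{I_{pq}}$ and away from the finitely many values $j/p,\,j/q$. The paper then invokes Corollary~\ref{Cor:localdim} (every point on a secant spanned by regular real points is central, hence has regular points of $S_1(X_{pq})$ in every neighbourhood), while you appeal to Terracini directly on a generic secant; either route is fine.

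The one step that should be rewritten is your ``dimension count''. As stated it is both unnecessary and not justified: a secant line through two curve points is \emph{not} a generic line in $\R^4$---indeed it lies entirely in the hypersurface $S_1(X_{pq})$, which is the Zariski closure of the edge family---so the assertion that it meets each $3$-dimensional boundary stratum in a $0$-dimensional set does not follow from a naive parameter count. Fortunately you have already done the real work two sentences earlier. Once you know that $[z(s),z(t)]$ is not contained in any proper face of $C_{pq}$, the entire open segment lies in $\textrm{int}(C_{pq})$: if some $p_\lambda=\lambda z(s)+(1-\lambda)z(t)$ with $\lambda\in(0,1)$ were in $\partial C_{pq}$, a supporting hyperplane at $p_\lambda$ would cut out an exposed face $F\ni p_\lambda$, and the face property would force $z(s),z(t)\in F$, contradicting your choice of $\tau$. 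So in fact $\Lambda=(0,1)$, not merely cofinite, and the transversality paragraph can be deleted. This elementary convexity observation is precisely what underlies the paper's terse sentence ``The list of all faces shows that there is a line segment \ldots\ that intersects the interior of $C$''.
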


\begin{proof}
The implication from (a) to (b) is \cite{SS}, Theorem 5.2, (b) to (c) is linear algebra: A spectrahedron $\{(x_1,\ldots,x_n)\in\R^n\colon A_0+x_1A_1+\ldots+x_nA_n \geq 0 \}$ can be defined in terms of polynomial inequalities by simultaneous sign conditions on the minors of the matrix inequality. We prove the implication from (c) to (a) by contraposition:\\
Let $C$ be a $4$-dimensional $\SO(2)$ orbitope which is not linearly isomorphic to the universal orbitope. Then the algebraic boundary consists of at least two components, one of which is the secant variety to the curve $X$ associated with the orbitope $C$. The list of all faces shows that there is a line segment joining two points $X_{\rm reg}(\R)$ of the orbit associated with $C$ that intersects the interior of $C$ (cf.~Remark \ref{Rem:IntervalIpq}). This point has full local dimension in the real points of the secant variety $S_1(X)(\R)$ to $X$ by Corollary \ref{Cor:localdim}. By Lemma \ref{Lem:BasicClosed} we conclude, that $C$ is not basic closed.
\end{proof}

\begin{Exm}
(a) We explicitly compute the algebraic boundary of the $4$-dimensional Barvinok-Novik orbitope $B_4=C_{13}$ - we will introduce the family of Barvinok-Novik orbitopes in section \ref{sec:BNOrbs}. This means that we have to compute the equation of the secant variety to the curve $X_{13}$ associated with $C_{13}$. We will use the ideal defining the secant variety to the curve associated with the universal $\SO(2)$-orbitope $C_3$, which is given by the $3\times 3$ minors of the linear matrix inequality defining $C_3$, cf.~Example \ref{Exm:UniBound}.\\
The union of all lines joinig two general real points of $X_{13}$ is a Zariski-dense subset of the secant variety $S_1(\bar{X}_{13})$ because the real points of $X_{13}$ are by definition Zariski-dense in $X_{13}$ (cf.~Corollary \ref{Cor:localdim}). The projection from $\R^6$ to $\R^4$ that projects $C_3$ onto $C_{13}$ gives a bijection, if restricted to the union of all lines joining two real points of the curve $X_3$ associated with $C_3$ because it is a bijection, if restricted to the orbit $X_{3_{\rm reg}}(\R)$. Therefore, the secant variety $S_1(X_{13})$ is the image of the secant variety $S_1(X_3)$ under this projection. This leads to an elimination problem. The author solved it using the computer algebra system \texttt{Macaulay2} \cite{Macaulay2}. In the coordinates $\A^4=\{w,x,y,z\}$, the equation of the secant variety is the following polynomial $f$ of degree $8$ and $47$ terms:
{\small
\begin{align*}
f = &-36w^4x^2y^2+24w^2x^4y^2-4x^6y^2+24w^5xyz-80w^3x^3yz+24wx^5yz-4w^6z^2\\
&+24w^4x^2z^2-36w^2x^4z^2+4w^6+12w^4x^2+12w^2x^4+4x^6-12w^5y+24w^3x^2y\\
&+36wx^4y+12w^4y^2+24w^2x^2y^2+12x^4y^2-4w^3y^3+12wx^2y^3-36w^4xz\\
&-24w^2x^3z+12x^5z-12w^2xy^2z+4x^3y^2z+12w^4z^2+24w^2x^2z^2+12x^4z^2\\
&-4w^3yz^2+12wx^2yz^2-12w^2xz^3+4x^3z^3-3w^4-6w^2x^2-3x^4+8w^3y-24wx^2y\\
&-6w^2y^2-6x^2y^2+y^4+24w^2xz-8x^3z-6w^2z^2-6x^2z^2+2y^2z^2+z^4
\end{align*}
}
(b) We compute, as above, the equation of secant variety to the curve $X_{14}$ associated with the $\SO(2)$-orbitope $C_{14}$:
We eliminate the $4$ variables of the projection from the universal $\SO(2)$-orbitope $C_4$ onto $C_{14}$ from the ideal of the $3\times 3$ minors of the linear matrix inequality describing $C_4$, i.e.~project the secant variety to the curve associated with $C_4$ to the space in which $C_{14}$ lives. We obtain an equation of degree $15$ and $281$ terms.
{\small
\begin{align*}
g  = & \;\, 8w^{12}y^3  -  144w^{10}x^2y^3  +  888w^8x^4y^3 - 2016w^6x^6y^3 + 888w^4x^8y^3 - 144w^2x^{10}y^3 \\
&+ 8x^{12}y^3 + 96w^{11}xy^2z - 1248w^9x^3y^2z + 4800w^7x^5y^2z - 4800w^5x^7y^2z \\
&+ 1248w^3x^9y^2z - 96wx^{11}y^2z + 384w^{10}x^2yz^2 - 3072w^8x^4yz^2 + 5376w^6x^6yz^2 \\
&- 3072w^4x^8yz^2 + 384w^2x^{10}yz^2 + 512w^9x^3z^3 - 1536w^7x^5z^3 + 1536w^5x^7z^3 \\
&- 512w^3x^9z^3 + 8w^{12}y^2 - 80w^{10}x^2y^2 + 120w^8x^4y^2 + 416w^6x^6y^2 + 120w^4x^8y^2 \\
&- 80w^2x^{10}y^2 + 8x^{12}y^2 + 64w^{11}xyz - 320w^9x^3yz - 384w^7x^5yz + 384w^5x^7yz \\
&+ 320w^3x^9yz - 64wx^{11}yz + 128w^{10}x^2z^2 - 256w^6x^6z^2 + 128w^2x^{10}z^2 - 8w^{12}y \\
&+ 16w^{10}x^2y + 136w^8x^4y + 224w^6x^6y + 136w^4x^8y + 16w^2x^{10}y - 8x^{12}y \\
&- 72w^{10}y^3 + 216w^8x^2y^3 + 1008w^6x^4y^3 + 1008w^4x^6y^3 + 216w^2x^8y^3 - 72x^{10}y^3 \\
&- 32w^{11}xz - 96w^9x^3z -64w^7x^5z + 64w^5x^7z + 96w^3x^9z + 32wx^{11}z - 288w^9xy^2z \\
&- 576w^7x^3y^2z + 576w^3x^7y^2z + 288wx^9y^2z - 72w^{10}yz^2 + 216w^8x^2yz^2 \\
&+ 1008w^6x^4yz^2 + 1008w^4x^6yz^2 + 216w^2x^8yz^2 - 72x^{10}yz^2 - 288w^9xz^3 \\
&- 576w^7x^3z^3 + 576w^3x^7z^3 + 288wx^9z^3 - 8w^{12} - 48w^{10}x^2 - 120w^8x^4 \\
&- 160w^6x^6 - 120w^4x^8 - 48w^2x^{10} - 8x^{12} - 8w^{10}y^2 + 16w^2y^2 + \\
\ldots &  + 16x^2y^2 + 2y^4 + 16w^2z^2 + 16x^2z^2 + 4y^2z^2 + 2z^4 - y^2 - z^2
\end{align*}
}
\end{Exm}

\section{Barvinok-Novik orbitopes}\label{sec:BNOrbs}
\begin{Def}
For any odd integer $n\in\N$, we consider the direct sum of representations $\rho=\rho_1\oplus \rho_3\oplus\ldots\oplus\rho_n$ of $\SO(2)$ indexed by all odd integers from $1$ to $n$. The $(n+1)$-dimensional \emph{Barvinok-Novik orbitope} is the convex hull of the orbit of $(1,0,1,0,\ldots,1,0)\in (\R^2)^{\frac{n+1}{2}}$ under the representation $\rho$. Explicitly, it is the convex hull of the symmetric trigonometric moment curve
\[
\{(\cos(\theta),\sin(\theta),\cos(3\theta),\sin(3\theta),\ldots,\cos(n\theta),\sin(n\theta))\colon \theta\in[0,2\pi]\}
\]
We parametrise this orbit by the map
\[
\textrm{SM}_{n+1}\;\colon
\left\{
\begin{array}[]{ccc}
\textrm{S}^1 &\to& \R^{n+1},\\
\exp(i\theta) &\mapsto& (\cos(\theta),\sin(\theta),\cos(3\theta),\sin(3\theta),\ldots,\cos(n\theta),\sin(n\theta))
\end{array}\right.
\]
\end{Def}

\begin{Rem}
Barvinok-Novik orbitopes are centrally symmetric, i.e.~$-B_{n+1}=B_{n+1}$, because $\textrm{SM}_{n+1}(\exp(it+i\pi))=-\textrm{SM}_{n+1}(\exp(it))$ for all $t\in\R$.
\end{Rem}

\begin{Prop}
Every Barvinok-Novik orbitope is a simplicial compact convex set.
\end{Prop}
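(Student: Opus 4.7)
Proof plan. Compactness and convexity of $B_{n+1}$ are immediate: $B_{n+1}$ is the convex hull of the continuous image of the compact group $\SO(2)$ in a finite-dimensional vector space. The substantive content is that every face of $B_{n+1}$ is a simplex.

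By Proposition~\ref{Prop:Extreme}, the set of extreme points of $B_{n+1}$ is the orbit $\SM_{n+1}([0,2\pi))$, so the extreme points of any face $F\subseteq B_{n+1}$ are parametrized by angles $\theta_1,\ldots,\theta_m\in[0,2\pi)$ and, by Krein--Milman, $F=\conv(\textrm{ext}(F))$. Since every face of $B_{n+1}$ is a face of some exposed face (at any $x\in\textrm{relint}(F)$, a supporting hyperplane of $B_{n+1}$ at $x$ must contain $F$ and cut out an exposed face having $F$ as a face), and a face of a simplex is a simplex, it is enough to show every exposed face is a simplex.

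So let $F$ be exposed, supported by an affine functional $\ell\geq 0$ on $B_{n+1}$ with $\ell|_F\equiv 0$, and set
\[
T(\theta)\;:=\;\ell(\SM_{n+1}(\theta)).
\]
Then $T$ is a nonnegative trigonometric polynomial lying in the $(n+2)$-dimensional real vector space
\[
V \;=\; \lspan_{\R}\bigl\{1,\;\cos k\theta,\;\sin k\theta \,:\, k \text{ odd},\; 1\leq k\leq n\bigr\},
\]
and its zero set on $[0,2\pi)$ is exactly $\{\theta_1,\ldots,\theta_m\}$. Since $T\geq 0$, each zero has multiplicity at least $2$; since $T$ has trigonometric degree at most $n$ and hence at most $2n$ zeros counted with multiplicity, we obtain $m\leq n$. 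In particular $F$ has finitely many extreme points and $F=\conv(\SM_{n+1}(\theta_1),\ldots,\SM_{n+1}(\theta_m))$.

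It remains to verify that these $m$ points are affinely independent in $\R^{n+1}$; this is the main obstacle, since $V$ is not a Chebyshev system (for example, $\cos n\theta$ already has $2n$ zeros), so no generic Vandermonde argument applies. The plan is to factor $T(\theta)=|P(e^{i\theta})|^2$ via the Fej\'er--Riesz theorem, where $P$ is a complex polynomial of degree $n$ whose roots on $\{|z|=1\}$ are precisely the $z_i=e^{i\theta_i}$, and to exploit the ``odd harmonics plus constant'' constraint on $T$, which is equivalent to $T(\theta+\pi)+T(\theta)=2a_0$. Splitting $P(z)=E(z^2)+zO(z^2)$ into even and odd parts, this constraint becomes $|E(w)|^2+|O(w)|^2\equiv a_0$ on $|w|=1$, a rigid orthogonality condition on the configuration of the roots $z_i$. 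Combined with the $n+2$ linear conditions $\sum_i c_i z_i^k=0$ for $k\in\{0,\pm 1,\pm 3,\ldots,\pm n\}$ that a hypothetical nontrivial affine relation $\sum_i c_i\,\SM_{n+1}(\theta_i)=0$, $\sum_i c_i=0$, would impose, this structural information forces $(c_i)\equiv 0$ and yields affine independence. Alternatively, one may directly invoke Theorem~1.2 of Barvinok and Novik, cited in the introduction, which gives an explicit description of the faces of $B_{n+1}$ from which simpliciality follows at once.
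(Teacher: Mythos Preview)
Your setup is sound: reducing to exposed faces, bounding the number of extreme points by $n$ via the zero count of the supporting trigonometric polynomial, and isolating affine independence as the remaining issue are all correct. The gap is that you do not actually prove affine independence. The Fej\'er--Riesz paragraph is a plan, not an argument: you write down the constraint $|E(w)|^2+|O(w)|^2\equiv a_0$ and the linear system $\sum_i c_i z_i^k=0$, and then assert that ``this structural information forces $(c_i)\equiv 0$'' without indicating how. Your fallback---invoking Theorem~1.2 of Barvinok--Novik---does not work either: that result (quoted later in this paper as Theorem~\ref{Thm:BNSimp}) only shows that points lying on a short arc span a simplicial face; it does not classify all faces and hence does not by itself yield simpliciality of $B_{n+1}$.

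The paper closes this gap by a direct linear-algebra computation, bypassing Fej\'er--Riesz entirely. Writing $z_j=e^{i\theta_j}$, affine independence of $\SM_{n+1}(\theta_0),\ldots,\SM_{n+1}(\theta_n)$ amounts to the $(n+2)\times(n+1)$ matrix with rows indexed by the monomials $\bar z^{\,n},\ldots,\bar z^{\,3},\bar z,1,z,z^3,\ldots,z^n$ having full column rank. One deletes the constant row, rescales the $j$-th column by $z_{j-1}^{-1}$ (using $\bar z_j=z_j^{-1}$ on the unit circle), and observes that after the substitution $y_j=z_j^2$ the resulting $(n+1)\times(n+1)$ matrix is Vandermonde in the $y_j$. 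Thus the points are affinely independent whenever the $y_j$ are pairwise distinct, i.e.\ whenever no two of the $\theta_j$ differ by $\pi$; since antipodal orbit points $p,-p$ can never both be extreme points of a proper face (their midpoint is the interior point $0$), this covers all faces. Combined with the bound on face dimension, simpliciality follows. This elementary Vandermonde reduction is the missing idea in your argument.
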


\begin{proof}
Let $n\geq 3$ be an odd integer. The maximal dimension of a face of $B_{n+1}$ is $n-1$ because the boundary of $B_{n+1}$ has dimension $n$ and a face always comes with an orbit of faces.
We will prove that any $n+1$ points on the orbit
\[
\{(\cos(\theta),\sin(\theta),\cos(3\theta),\sin(3\theta),\ldots,\cos(n\theta),\sin(n\theta))\colon\theta\in[0,2\pi)\}
\]
are $\R$-affinely linearly independent. We switch to complex coordinates: Take pairwise distinct points $\theta_0,\theta_1,\ldots,\theta_n\in[0,2\pi)$. Set $z_j=\exp(i\theta_j)$. The corresponding points $(z_j,z_j^3,\ldots,z_j^n)$ on the orbit are $\R$-affinely linearly independent if and only if we can conclude from $a_j\in\R$ ($j=0,\ldots,n$) and the equations
\[
\begin{array}[]{ccc}
\sum_{j=0}^n a_j & = & 0 \\
\sum_{j=0}^n a_j \left(
\begin{array}[]{c}
z_j \\ z_j^3 \\ \vdots \\ z_j^n
\end{array}\right) & = & 0
\end{array}
\]
that all the coefficients $a_j$ are zero. This is true if the $(n+2)\times(n+1)$-matrix
\[
\left(\begin{array}[]{cccc}
\ol{z_0}^n & \ol{z_1}^n & \ldots & \ol{z_n}^n \\
\vdots & \vdots & & \vdots \\
\ol{z_0}^3 & \ol{z_1}^3 & \ldots & \ol{z_n}^3 \\
\ol{z_0} & \ol{z_1} & \ldots & \ol{z_n} \\
1 & 1 & \ldots & 1\\
z_0 & z_1 & \ldots & z_n \\
z_0^3 & z_1^3 & \ldots & z_n^3 \\
\vdots & \vdots & & \vdots \\
z_0^n & z_1^n & \ldots & z_n^n
\end{array}\right)
\]
has full rank $n+1$ over $\C$. By using Vandermonde's rule, we prove that the determinant of the matrix obtained from the one above by deleting the row of ones does not vanish. To see this, we first rescale the $j$-th column of the new $(n+1)\times(n+1)$ matrix, i.e.~the column with $z_{j-1}$, by $z_{j-1}^{-1}$. Using the identity $z_j^{-1}\ol{z_j}=z_j^{-2}$, we get the matrix
\[
\left( \begin{array}[]{cccc}
z_0^{-2}\ol{z_0}^{n-1} & z_1^{-2}\ol{z_1}^{n-1} & \ldots & z_n^{-2}\ol{z_n}^{n-1} \\
\vdots & \vdots & & \vdots \\
z_0^{-2}\ol{z_0}^2 & z_1^{-2}\ol{z_1}^2 & \ldots & z_1^{-2}\ol{z_n}^2 \\
z_0^{-2} & z_1^{-2} & \ldots & z_n^{-2} \\
1 & 1 & \ldots & 1\\
z_0^2 & z_1^2 & \ldots & z_n^2 \\
\vdots & \vdots & & \vdots \\
z_0^{n-1} & z_1^{n-1} & \ldots & z_n^{n-1}
\end{array}\right) 
\]
This gives a Vandermonde matrix, if we rescale the $j$-th column by $z_{j-1}^{n+1}$ and substitute $y_j:=z_j^2$. We conclude that the determinant of this matrix does not vanish. This proves the claim.
\end{proof}

\begin{Rem}
The Barvinok-Novik orbitope $B_{n+1}$ of dimension $n+1$ has faces of the highest possible dimension $n-1$. Namely, Barvinok and Novik prove in \cite{BarvinokNovikMR2383752}, section 3.4, that for all $\theta\in[0,2\pi)$ the set
\[
\conv(\{SM_{n+1}(\exp(i\theta + \frac{2\pi i}{n}j))\colon j=0,\ldots,n-1\})
\]
is a simplicial face of $B_{n+1}$.\\ 
An easy computation shows that these faces are exposed by hyperplanes of the form $\{x\in\R^{n+1}\colon x^t w = 1 \}$ for a unit vector $w=(0,\ldots,0,w_n,w_{n+1})\in\R^{n+1}$ whose first $n-1$ entries are zero.
\end{Rem}

We have already seen that the $4$-dimensional Barvinok-Novik orbitope $B_4$ is not basic closed (cf.~Corollary \ref{Cor:4dim}). We proved this by calculating the algebraic boundary of $B_4$. By the description of its faces, we saw that the algebraic boundary of this convex set intersects the interior in regular points. The component on which these points lie is the secant variety to the curve associated with $B_4$. We will now generalise this to higher dimensions by examining higher secant varieties.

We will need the following result due to Barvinok and Novik on the existence of faces of $B_{n+1}$ of appropriate dimension. It says, informally speaking, that the Barvinok-Novik orbitope is locally neighbourly.
\begin{Thm}[\cite{BarvinokNovikMR2383752}, Theorem 1.2]\label{Thm:BNSimp}
Let $n\geq 3$ be an odd integer. For every integer $j \leq \frac{n-1}{2}$ there is some constant $\phi_j$ such that any $j+1$ points on $S^1$ lying on an arc of length smaller than $\phi_j$ define a simplicial face of $B_{n+1}$. This face is the convex hull of the images of these $j+1$ points under the map $\textrm{SM}_{n+1}$.
\end{Thm}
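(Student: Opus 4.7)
The plan is to produce, for each configuration of $j+1$ sufficiently close angles $\theta_0,\ldots,\theta_j\in[0,2\pi)$, an explicit supporting hyperplane of $B_{n+1}$ whose exposed face is exactly $\conv\{\textrm{SM}_{n+1}(e^{i\theta_0}),\ldots,\textrm{SM}_{n+1}(e^{i\theta_j})\}$. Simpliciality will then be automatic from the preceding proposition, whose Vandermonde-style affine independence statement applies to any $n+1$ points of the orbit, hence a fortiori to the $j+1\le(n+1)/2$ points here.

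First I would rephrase the problem in terms of trigonometric polynomials. A linear functional $v\in(\R^{n+1})^*$, pulled back along $\textrm{SM}_{n+1}$, produces a real trigonometric polynomial
\[
\ell_v(\theta)=\sum_{k\in\{1,3,\ldots,n\}}\bigl(a_k\cos(k\theta)+b_k\sin(k\theta)\bigr)
\]
whose Fourier spectrum lies in the odd integers between $-n$ and $n$. Thus $F(\theta):=c-\ell_v(\theta)$ has spectrum in $\{0,\pm1,\pm3,\ldots,\pm n\}$, and a face of $B_{n+1}$ exposed by $v$ at value $c$ corresponds precisely to such an $F$ that is nonnegative on $S^1$ and whose zero set on $S^1$ equals $\{\theta_0,\ldots,\theta_j\}$. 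Producing such an $F$ for every nearby configuration is what remains.

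The canonical nonnegative trigonometric polynomial with the correct zero set is
\[
P(\theta)=\prod_{k=0}^{j}\sin^{2}\!\bigl((\theta-\theta_k)/2\bigr),
\]
which vanishes to order exactly $2$ at each $\theta_k$ and is positive elsewhere, but whose Fourier support is the full interval $\{-(j+1),\ldots,j+1\}$ and therefore contains forbidden even frequencies. I would remove them by writing $F=P\cdot G$ for a trigonometric polynomial $G$ of degree at most $n-j-1$ whose coefficients are chosen to solve the linear system
\[
\widehat{PG}(2m)=0,\qquad m\ne0,\ |2m|\le n,
\]
together with normalization. On the diagonal $\theta_0=\cdots=\theta_j=\theta^*$ this reduces, after rotating by $\theta^*$, to canceling the even Fourier coefficients of $\sin^{2(j+1)}(\theta/2)$; here a Fej\'er-type construction yields an explicit strictly positive $G$. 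Off the diagonal, the coefficients of $G$ depend continuously (indeed rationally, via Cramer's rule) on $(\theta_0,\ldots,\theta_j)$, so $G$ remains strictly positive in a neighborhood of the diagonal. The infimum of arc lengths on which this holds---which by rotation invariance depends only on $j$ and $n$---is the required $\phi_j$.

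The main obstacle is verifying the two quantitative ingredients of this continuity argument: (i) the coefficient matrix of the linear system determining $G$ is nondegenerate throughout a neighborhood of every diagonal configuration, so the solution is a continuous function of $(\theta_0,\ldots,\theta_j)$, and (ii) the resulting $G$ remains strictly positive on $S^1$ over this neighborhood. Both are finite-dimensional statements and both use the bound $j\le(n-1)/2$: it ensures $\deg P+\deg G\le n$ and leaves enough odd-frequency slots to accommodate the full zero locus. Once $F=PG$ is produced, reading off $(c,v)$ from $F=c-\ell_v(\theta)$ gives the supporting hyperplane, and the affine independence supplied by the preceding proposition upgrades the exposed face to a simplex.
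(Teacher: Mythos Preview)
The paper does not prove this theorem at all: it is quoted verbatim from Barvinok--Novik \cite{BarvinokNovikMR2383752} and used as a black box in the subsequent argument. There is therefore no proof in the paper to compare your proposal against.

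That said, your outline is in the right spirit---reducing to the construction of a nonnegative trigonometric polynomial with spectrum in $\{0,\pm1,\pm3,\ldots,\pm n\}$ and prescribed simple double zeros is exactly the correct reformulation, and this is essentially how Barvinok and Novik proceed. But the proposal is a plan, not a proof: the two items you flag as ``the main obstacle'' are precisely where the entire content lies, and you only assert them. In particular, the claim that on the diagonal a ``Fej\'er-type construction yields an explicit strictly positive $G$'' with $\sin^{2(j+1)}(\theta/2)\cdot G(\theta)$ supported on odd frequencies is not obvious and is not something one can wave at; you would need to actually exhibit such a $G$ (Barvinok--Novik do this via an explicit raised-cosine construction, not via Fej\'er kernels). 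Likewise, the nondegeneracy of the linear system determining $G$ off the diagonal is asserted but not argued---and since at $j=(n-1)/2$ the system is square, there is no slack to absorb a rank drop. Finally, to pass from ``for each diagonal point there is a neighborhood'' to a uniform $\phi_j$, you need a compactness argument (the diagonal modulo rotation is a point, so this is fine, but it should be said). In short: correct strategy, but the hard steps are still to be done.
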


\begin{Thm}
Let $n\in\N$ be an odd integer greater than $2$. Denote by $X_{n+1}$ the curve associated with the $(n+1)$-dimensional Barvinok-Novik orbitope $B_{n+1}$. The $\frac{n-1}{2}$-th secant variety to $\bar{X}_{n+1}$ is an irreducible component of the algebraic boundary of $B_{n+1}$.
\end{Thm}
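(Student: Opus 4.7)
The plan is to recognise this as an immediate application of Corollary \ref{Cor:AlgBoundOrb}. Writing $n+1 = 2r$ with $r = (n+1)/2$, we have $r-1 = (n-1)/2$, so the ambient space is $\A^{2r}$ and the secant variety in the statement is exactly the $(r-1)$-th secant variety to the irreducible curve $\bar{X}_{n+1}$.

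First I would verify the hypotheses of the corollary. The curve $X_{n+1}$ is irreducible and even rational (Proposition \ref{Prop:CurveAss}), and its real points, being the image of $S^1$ under $\SM_{n+1}$, are Zariski dense in $X_{n+1}$ by the same proposition. The interior of $B_{n+1}$ is non-empty because the representation $\rho_1 \oplus \rho_3 \oplus \cdots \oplus \rho_n$ is multiplicity-free and contains no trivial factor, and the orbit vector $(1,0,\ldots,1,0)$ has all complex components non-zero (Remark \ref{Rem:OrbitChoice}).

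Next I would invoke Theorem \ref{Thm:BNSimp} with $j = r-1 = (n-1)/2$. It yields a positive constant $\phi_{r-1}$ such that any $r$ points of $S^1$ lying on a common arc of length less than $\phi_{r-1}$ have $\SM_{n+1}$-images which are the vertices of a simplicial face of $B_{n+1}$. I would then pick $r$ pairwise distinct angles $\theta_1, \ldots, \theta_r \in [0, 2\pi)$ lying on a common arc of length $\phi_{r-1}/2$, and set $x_j := \SM_{n+1}(\exp(i\theta_j)) \in X_{n+1}(\R)$. By continuity of $\SM_{n+1}$ and openness of the condition ``contained in a single arc of length less than $\phi_{r-1}$'', there are semi-algebraic open neighbourhoods $V_j \subset S^1$ of $\exp(i\theta_j)$ so small that every tuple $(w_1,\ldots,w_r) \in V_1 \times \cdots \times V_r$ still fits in some arc of length less than $\phi_{r-1}$. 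Setting $U_j := \SM_{n+1}(V_j) \subset X_{n+1}(\R)$, every $r$-tuple chosen from $U_1 \times \cdots \times U_r$ then spans a face of $B_{n+1}$ by Theorem \ref{Thm:BNSimp}, and Corollary \ref{Cor:AlgBoundOrb} gives the claim.

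I do not expect a real obstacle: the substantive geometric input, namely the existence of an $r$-parameter family of $(r-1)$-dimensional simplicial faces, is precisely what the Barvinok-Novik local neighbourliness theorem provides, and Corollary \ref{Cor:AlgBoundOrb} packages it into the desired conclusion. The only minor point of bookkeeping is that $\SM_{n+1}$ is a homeomorphism from $S^1$ onto the orbit $X_{n+1}(\R)$ (continuous and injective from a compact space), so the open neighbourhoods $V_j \subset S^1$ really do transport to open semi-algebraic neighbourhoods $U_j \subset X_{n+1}(\R)$ as required by the corollary.
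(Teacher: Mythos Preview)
Your proposal is correct and follows essentially the same route as the paper: verify the hypotheses of Corollary~\ref{Cor:AlgBoundOrb} and then feed it the $r$-parameter family of simplicial faces supplied by the Barvinok--Novik local neighbourliness Theorem~\ref{Thm:BNSimp}. The only cosmetic difference is that the paper justifies non-emptiness of the interior by noting that $B_{n+1}$ is a linear projection of the universal orbitope $C_n$ (which contains the origin in its interior), whereas you appeal to Remark~\ref{Rem:OrbitChoice}; note that part~(a) of that remark is stated in the opposite direction, so strictly speaking you are using the easy converse, which is of course true.
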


\begin{proof}
Set $k:=\frac{n-1}{2}$. Firstly, the origin is an interior point of the Barvinok-Novik orbitope $B_{n+1}$ because it is an interior point of all universal $\SO(2)$-orbitopes (cf.~\cite{SS}, Theorem 5.2) and $B_{n+1}$ is a linear projection of $C_{n}$. Therefore, $X_{n+1}$ is not contained in any hyperplane.
So by \cite{LangeMR744323}, the dimension of $S_k(X_{n+1})$ equals $2k+1=2\frac{n-1}{2}+1 = n$. Because it is the secant variety to an irreducible curve, it is irreducible.\\
To see that it is a component of the algebraic boundary of $B_{n+1}$, use Corollary \ref{Cor:AlgBoundOrb} with points ${\rm SM}_{n+1}(\exp(it_0)),\ldots,{\rm SM}_{n+1}(\exp(it_k))\in X(\R)$, where $t_0,\ldots,t_k\in[0,2\pi)$ are chosen such that the points $\exp(it_0),\ldots,\exp(it_k)$ lie on an arc on $S^1\subset\C$ of length smaller than the constant $\varphi_k>0$ of the above Theorem \ref{Thm:BNSimp}. Choose sufficiently small semi-algebraic neighbourhoods $U_j={\rm B}(x_j,\epsilon_j)\cap X(\R)$ of $x_j$ satisfying the hypothesis of Theorem \ref{Cor:AlgBoundOrb}. The existence of a sufficiently small $\epsilon_j>0$ is guaranteed by Theorem \ref{Thm:BNSimp}.
\end{proof}

\begin{Cor}
No Barvinok-Novik orbitope is a basic closed semi-algebraic set.
\end{Cor}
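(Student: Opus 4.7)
The plan is to apply Lemma \ref{Lem:BasicClosed}(b) to the irreducible hypersurface component $S_k(X_{n+1})\subset\partial_a B_{n+1}$ (with $k=\tfrac{n-1}{2}$) identified in the preceding theorem. I want to exhibit a point $p\in\mathrm{int}(B_{n+1})\cap S_k(X_{n+1})$ which is smooth on $S_k(X_{n+1})$ and not contained in any other irreducible component of $\partial_a B_{n+1}$, so that $p$ is a regular point of $\partial_a B_{n+1}$ lying in the interior of $B_{n+1}$.

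The key geometric input is the central symmetry $B_{n+1}=-B_{n+1}$. First I would pick any orbit point $x_0=\mathrm{SM}_{n+1}(e^{i\theta_0})$; then $-x_0=\mathrm{SM}_{n+1}(e^{i(\theta_0+\pi)})$ is again on the orbit, hence a regular real point of $X_{n+1}$. Adjoining $k-1$ further distinct orbit points $x_1,\ldots,x_{k-1}$, the simpliciality proposition for $B_{n+1}$ (affine independence of any $n+1$ orbit points) applied to the $k+1\le n+1$ points $-x_0,x_0,x_1,\ldots,x_{k-1}$ shows that they are affinely independent and therefore span a genuine secant $k$-plane $\Lambda$. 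Since $0=\tfrac12 x_0+\tfrac12(-x_0)\in\Lambda$, the origin lies on $S_k(X_{n+1})$; moreover $0\in\mathrm{int}(B_{n+1})$, because $B_{n+1}$ is a linear projection of the universal orbitope $C_n$ whose interior contains the origin (as noted in the preceding proof).

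Corollary \ref{Cor:localdim} applied to $\Lambda$ then shows that $0$ is a central point of $S_k(X_{n+1})$, so every Euclidean neighbourhood of $0$ meets $S_k(X_{n+1})(\R)$ in a Zariski-dense subset of the irreducible hypersurface $S_k(X_{n+1})$. The singular locus of $S_k(X_{n+1})$ and its intersections with the remaining components of $\partial_a B_{n+1}$ are proper Zariski-closed subsets of $S_k(X_{n+1})$, so a Zariski-dense set cannot lie in their union. Choosing $p$ close enough to $0$ to remain in $\mathrm{int}(B_{n+1})$ while avoiding these exceptional loci yields a regular point of $\partial_a B_{n+1}$ in the interior of $B_{n+1}$, and Lemma \ref{Lem:BasicClosed}(b) then concludes that $B_{n+1}$ is not basic closed. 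The main subtlety is exactly this last step: the \emph{central point} property at the origin must be converted into the existence of a nearby smooth real point of $S_k(X_{n+1})$ off every other component of the algebraic boundary, but the irreducibility of $S_k(X_{n+1})$ (inherited from $X_{n+1}$) combined with Zariski-density of the nearby real locus makes this conversion automatic.
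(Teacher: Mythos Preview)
Your argument is correct and follows essentially the same route as the paper's proof: both place the origin on $S_k(X_{n+1})$ via the antipodal pair $\mathrm{SM}_{n+1}(\pm 1)$ on the orbit, invoke Corollary~\ref{Cor:localdim} for centrality, and finish with Lemma~\ref{Lem:BasicClosed}(b). You are a bit more explicit than the paper in two places---extending the secant line to a full secant $k$-plane (needed for Corollary~\ref{Cor:localdim} as stated) and in moving from a regular point of the component $S_k(X_{n+1})$ to a regular point of the whole $\partial_a B_{n+1}$---both of which the paper leaves implicit.
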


\begin{proof}
The $\frac{n-1}{2}=:k$-th secant variety to the curve $X_{n+1}$ associated with $B_{n+1}$ is a component of the algebraic boundary. The origin lies on this component because it lies on the line joining $\textrm{SM}_{n+1}(1)$ and $\textrm{SM}_{n+1}(-1)$. It is a central point of $S_k(X_{n+1})$ by Corollary \ref{Cor:localdim}, i.e.~in every euclidean neighbourhood of the origin there is a regular point of $S_k(X_{n+1})$. By Lemma \ref{Lem:BasicClosed}, this implies that the Barvinok-Novik orbitope is not basic closed.
\end{proof}

In the special case of $B_4$, we look into this argument more concretely by considering a fortunately chosen slice of the convex set.
\begin{Exm}\label{Exm:B4}
We intersect $B_4$ with the subspace $W:=\{(0,x,0,z)\in\R^4\colon x,z\in\R\}$.
The polynomials defining the irreducible components of $\partial_a B_4$ restricted to this subspace factor $0^2+z^2-1=(z+1)(z-1)$ and $f(0,x,0,z)=(x+z)^3(4x^3-3x+z)$ (cf.~figure \ref{fig:B4notbasic}). The polynomial $4x^3-3x+z$ is part of the algebraic boundary of the convex and semi-algebraic set $W\cap B_4$ but the origin is an interior point of $W\cap B_4$ and a regular point of the hypersurface $\V(4x^3-3x+z)$. Using Lemma \ref{Lem:BasicClosed}, we can conclude from this that $B_4$ is not basic closed.
\end{Exm}

{\small\linespread{1}

}

\begin{thebibliography}{BEGFB94}
\providecommand{\url}[1]{\texttt{#1}}
\providecommand{\urlprefix}{URL }

\bibitem[BCR98]{BochnakMR1659509}
J.~Bochnak, M.~Coste, and M.-F. Roy.
\newblock \emph{Real algebraic geometry}, volume~36 of \emph{Ergebnisse der
  Mathematik und ihrer Grenzgebiete (3) [Results in Mathematics and Related
  Areas (3)]}.
\newblock Springer-Verlag, Berlin (\textbf{1998}).
\newblock Translated from the 1987 French original, Revised by the authors.

\bibitem[BEGFB94]{BoydMR1284712}
S.~Boyd, L.~El~Ghaoui, E.~Feron, and V.~Balakrishnan.
\newblock \emph{Linear matrix inequalities in system and control theory},
  volume~15 of \emph{SIAM Studies in Applied Mathematics}.
\newblock Society for Industrial and Applied Mathematics (SIAM), Philadelphia,
  PA (\textbf{1994}).

\bibitem[BN08]{BarvinokNovikMR2383752}
A.~Barvinok and I.~Novik.
\newblock \emph{A centrally symmetric version of the cyclic polytope}.
\newblock \emph{Discrete Comput. Geom.}, 39(1-3):76--99 (\textbf{2008}).

\bibitem[FOV99]{FlenMR1724388}
H.~Flenner, L.~O'Carroll, and W.~Vogel.
\newblock \emph{Joins and intersections}.
\newblock Springer Monographs in Mathematics. Springer-Verlag, Berlin
  (\textbf{1999}).

\bibitem[Ful69]{FultonMR0313252}
W.~Fulton.
\newblock \emph{Algebraic curves. {A}n introduction to algebraic geometry}.
\newblock W. A. Benjamin, Inc., New York-Amsterdam (\textbf{1969}).
\newblock Notes written with the collaboration of Richard Weiss, Mathematics
  Lecture Notes Series.

\bibitem[GN]{NetzerGouv}
J.~Gouveia and T.~Netzer.
\newblock \emph{Positive polynomials and projections of spectrahedra}.

\bibitem[GPT10]{GouMR2630035}
J.~Gouveia, P.~A. Parrilo, and R.~R. Thomas.
\newblock \emph{Theta bodies for polynomial ideals}.
\newblock \emph{SIAM J. Optim.}, 20(4):2097--2118 (\textbf{2010}).
\newline\urlprefix\url{http://dx.doi.org/10.1137/090746525}

\bibitem[GS]{Macaulay2}
D.~Grayson and M.~Stillman.
\newblock \emph{Macaulay2, a software system for research in algebraic
  geometry}.
\newblock Available at http://www.math.uiuc.edu/Macaulay2/.

\bibitem[GT11]{GouTho}
J.~Gouveia and R.~R. Thomas.
\newblock \emph{Convex hulls of algebraic sets} (\textbf{2011}).
\newline\urlprefix\url{http://arxiv.org/abs/1007.1191}

\bibitem[Har92]{HarrisiMR1182558}
J.~Harris.
\newblock \emph{Algebraic geometry}, volume 133 of \emph{Graduate Texts in
  Mathematics}.
\newblock Springer-Verlag, New York (\textbf{1992}).
\newblock A first course.

\bibitem[HN09]{HeltonNieMR2515796}
J.~W. Helton and J.~Nie.
\newblock \emph{Sufficient and necessary conditions for semidefinite
  representability of convex hulls and sets}.
\newblock \emph{SIAM J. Optim.}, 20(2):759--791 (\textbf{2009}).
\newline\urlprefix\url{http://dx.doi.org/10.1137/07070526X}

\bibitem[HN10]{HeltonNieMR2533752}
J.~W. Helton and J.~Nie.
\newblock \emph{Semidefinite representation of convex sets}.
\newblock \emph{Math. Program.}, 122(1, Ser. A):21--64 (\textbf{2010}).
\newline\urlprefix\url{http://dx.doi.org/10.1007/s10107-008-0240-y}

\bibitem[HV07]{HelVinMR2292953}
J.~W. Helton and V.~Vinnikov.
\newblock \emph{Linear matrix inequality representation of sets}.
\newblock \emph{Comm. Pure Appl. Math.}, 60(5):654--674 (\textbf{2007}).
\newline\urlprefix\url{http://dx.doi.org/10.1002/cpa.20155}

\bibitem[KS89]{KnebuschMR1029278}
M.~Knebusch and C.~Scheiderer.
\newblock \emph{Einf\"uhrung in die reelle {A}lgebra}, volume~63 of
  \emph{Vieweg Studium: Aufbaukurs Mathematik [Vieweg Studies: Mathematics
  Course]}.
\newblock Friedr. Vieweg \& Sohn, Braunschweig (\textbf{1989}).

\bibitem[Lan84]{LangeMR744323}
H.~Lange.
\newblock \emph{Higher secant varieties of curves and the theorem of {N}agata
  on ruled surfaces}.
\newblock \emph{Manuscripta Math.}, 47(1-3):263--269 (\textbf{1984}).
\newline\urlprefix\url{http://dx.doi.org/10.1007/BF01174597}

\bibitem[Las09]{LasMR2505746}
J.~B. Lasserre.
\newblock \emph{Convex sets with semidefinite representation}.
\newblock \emph{Math. Program.}, 120(2, Ser. A):457--477 (\textbf{2009}).
\newline\urlprefix\url{http://dx.doi.org/10.1007/s10107-008-0222-0}

\bibitem[RG95]{GoldmanMR1342934}
M.~Ramana and A.~J. Goldman.
\newblock \emph{Some geometric results in semidefinite programming}.
\newblock \emph{J. Global Optim.}, 7(1):33--50 (\textbf{1995}).
\newline\urlprefix\url{http://dx.doi.org/10.1007/BF01100204}

\bibitem[{Sch}11]{Scheiderer2011arXiv1104.1772S}
C.~{Scheiderer}.
\newblock \emph{{A Positivstellensatz for projective real varieties}}.
\newblock \emph{ArXiv e-prints} (\textbf{2011}).

\bibitem[Smi85]{SmiMR815607}
Z.~Smilansky.
\newblock \emph{Convex hulls of generalized moment curves}.
\newblock \emph{Israel J. Math.}, 52(1-2):115--128 (\textbf{1985}).

\bibitem[SSS10]{SS}
R.~Sanyal, F.~Sottile, and B.~Sturmfels.
\newblock \emph{Orbitopes} (\textbf{2010}).
\newline\urlprefix\url{http://arxiv.org/abs/0911.5436}

\bibitem[VB96]{VanBoydMR1379041}
L.~Vandenberghe and S.~Boyd.
\newblock \emph{Semidefinite programming}.
\newblock \emph{SIAM Rev.}, 38(1):49--95 (\textbf{1996}).
\newline\urlprefix\url{http://dx.doi.org/10.1137/1038003}

\end{thebibliography}
\end{document}